\newcommand{\ds}{\displaystyle}
\numberwithin{equation}{section}
\def\ds{\displaystyle}
\newtheorem{theorem}{Theorem}[section]
\newtheorem{proposition}[theorem]{Proposition}
\newtheorem{corollary}[theorem]{Corollary}
\newtheorem{conjecture}{Conjecture}[section]
\newcommand{\mathsym}[1]{{}}
\newcommand{\unicode}[1]{{}}
\begin{document}

\title{Multiple orthogonal polynomials associated with an exponential cubic weight}
\author{Walter Van Assche\footnotemark[1] \quad Galina Filipuk\footnotemark[2]
\quad Lun Zhang\footnotemark[1]\ \footnotemark[3]}
\date{\today}

\maketitle
\renewcommand{\thefootnote}{\fnsymbol{footnote}}
\footnotetext[1]{Department of Mathematics, KU Leuven,
Celestijnenlaan 200B box 2400, BE-3001 Leuven, Belgium. E-mail: Walter.VanAssche@wis.kuleuven.be}
\footnotetext[2]{Faculty of Mathematics, Informatics and Mechanics,
University of Warsaw, Banacha 2, Warsaw, 02-097, Poland. E-mail: filipuk@mimuw.edu.pl}
\footnotetext[3]{School of Mathematical Sciences and Shanghai Key Laboratory for Contemporary Applied Mathematics, Fudan University, Shanghai 200433, People's Republic of China. E-mail: lunzhang@fudan.edu.cn}

\begin{abstract}
We consider  multiple orthogonal polynomials associated with the
exponential cubic weight $e^{-x^3}$ over two contours in the complex
plane.
We study the basic properties of these polynomials, including the
Rodrigues formula and nearest-neighbor recurrence relations. It
turns out that the recurrence coefficients are related to a discrete
Painlev\'{e} equation. The asymptotics of the recurrence
coefficients, the ratio of the diagonal multiple orthogonal polynomials and the (scaled) zeros of these polynomials are also investigated.

\vspace{2mm} \textbf{Keywords:} multiple orthogonal polynomials,
exponential cubic weight, Rodrigues formula, nearest-neighbor
recurrence relations, string equations, discrete Painlev\'{e}
equation, zeros, asymptotics
\end{abstract}

\section{Introduction and statement of the results}

\subsection{Orthogonal polynomials associated with an exponential cubic weight}
\label{sec:op}

A sequence of non-constant monic polynomials $\{p_n\}$ with $\deg
p_n\leq n$ is said to be orthogonal with respect to the exponential
cubic weight $e^{-x^3}$ if
\begin{equation}\label{eq:orthogonality}
\int_{\Gamma}p_{n}(x)x^k e^{-x^3}d x =0, \qquad k=0,1,\ldots,n-1,
\end{equation}
where the contour $\Gamma$ is chosen such that the above integral
converges. These polynomials satisfy the three-term recurrence
relation
\begin{equation} \label{recurrence}
x p_n(x) = p_{n+1}(x) + \beta_n p_n(x) + \gamma_n^2 p_{n-1}(x),
\end{equation}
where
\begin{equation}\label{recu:monic}
\beta_n=\frac{\int_\Gamma x p_n^2(x)e^{-x^3} dx}{\int_\Gamma
p_n^2(x)e^{-x^3} dx},\qquad \gamma_n^2=\frac{\int_\Gamma x
p_n(x)p_{n-1}(x)e^{-x^3} dx}{\int_\Gamma p_{n-1}^2(x)e^{-x^3} dx},
\end{equation}
and the initial condition is taken to be $\gamma_0^2 p_{-1}=0$. It
is shown by A. Magnus \cite{Magnus2} that the recurrence
coefficients $\beta_n$ and $\gamma_n^2$ satisfy the ``string''
equations
\begin{align}
    \gamma_{n+1}^2+\beta_n^2+\gamma_n^2=0,  \label{eq:string 1}\\
    3\gamma_n^2(\beta_{n-1}+\beta_n)=n. \label{eq:string 2}
\end{align}
For the convenience of the reader, we derive the string equations
using ladder operators for orthogonal polynomials in the Appendix.
Some variants of orthogonal polynomials associated with the
exponential cubic weight have recently been studied in the context
of numerical analysis \cite{DHK} and random matrix theory \cite{BD}.

\begin{figure}[ht]
\begin{center}
\rotatebox{270}{\resizebox{!}{3in}{\includegraphics{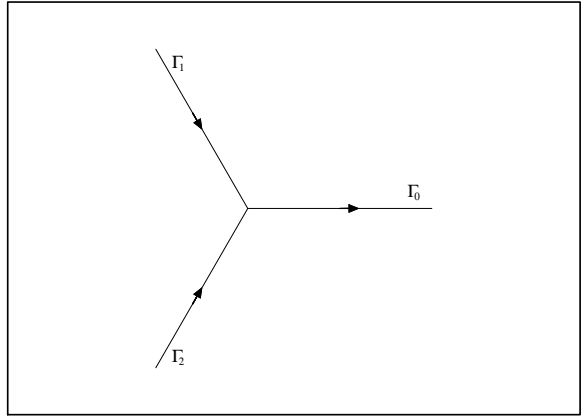}}}
\caption{The three rays $\Gamma_0, \Gamma_1, \Gamma_2$}
\label{fig:0}
\end{center}
\end{figure}

For our purpose, we are concerned with the polynomials for specific
contours $\Gamma$. Consider the three rays (see Figure \ref{fig:0})
\begin{equation}\label{def:contour}
\Gamma_k=\{z\in\mathbb{C}: \arg z=\omega^k \}, \qquad k=0,1,2,
\end{equation}
where $\omega=e^{2\pi i/3}$ is the primitive third root of unity
 and the orientations are all taken
from left to right. Clearly, the integral \eqref{eq:orthogonality}
is well-defined for each $\Gamma_k$. We shall denote by
$p_{n}^{(1)}$ the polynomials satisfying \eqref{eq:orthogonality}
with $\Gamma=\Gamma_0 \cup \Gamma_1$. The corresponding recurrence
coefficients will be accordingly denoted by $\beta_n^{(1)}$ and
$(\gamma_{n}^{(1)})^2$. Hence, we have
\begin{equation}\label{eq:orthogonality2}
\int_{\Gamma_0 \cup \Gamma_1}p_{n}^{(1)}(x)x^k e^{-x^3}d x =0,
\qquad k=0,1,\ldots,n-1,
\end{equation}
and
\begin{equation} \label{recurrence 1}
x p_n^{(1)}(x) = p_{n+1}^{(1)}(x) + \beta_n^{(1)} p_n^{(1)}(x) +
(\gamma_n^{(1)})^2 p_{n-1}^{(1)}(x).
\end{equation}
 From \eqref{recu:monic}, it is readily seen that
\begin{equation}\label{eq:beta01}
\beta_0^{(1)}=\frac{\int_{\Gamma_0 \cup \Gamma_1} x e^{-x^3}
dx}{\int_{\Gamma_0 \cup \Gamma_1} e^{-x^3}
dx}=\frac{\Gamma(2/3)}{\Gamma(1/3)}e^{\pi i/3}.
\end{equation}
Thus, one can determine $(\beta_n^{(1)}, (\gamma_{n}^{(1)})^2)$
recursively from the string equations \eqref{eq:string
1}--\eqref{eq:string 2} with initial condition $\gamma_0^{(1)}=0$
and $\beta_0^{(1)} =\frac{\Gamma(2/3)}{\Gamma(1/3)}e^{\pi i/3}$.

In a similar manner, we let $p_n^{(2)}$  be the polynomials
satisfying \eqref{eq:orthogonality} with $\Gamma=\Gamma_0 \cup
\Gamma_2$, and denote by $\beta_n^{(2)}$ and $(\gamma_{n}^{(2)})^2$
the corresponding recurrence coefficients. To this end, one has
\begin{equation}
\beta_0^{(2)}=\frac{\int_{\Gamma_0 \cup \Gamma_2} x e^{-x^3}
dx}{\int_{\Gamma_0 \cup \Gamma_2} e^{-x^3}
dx}=\overline{\beta_0^{(1)}}=\frac{\Gamma(2/3)}{\Gamma(1/3)}e^{-\pi
i/3}.
\end{equation}

For the recurrence coefficients $\beta_n^{(i)}$ and
$(\gamma_{n}^{(i)})^2$, $i=1,2$, the following proposition holds.
\begin{proposition}\label{prop:recu}
There exist two real sequences $a_n$ and $b_n$, $n\in\mathbb{N} =
\{0,1,2,3,\ldots \}$ such that
\begin{align}\label{eq:betatob gammatoa}
\beta_n^{(1)}=b_{n}e^{\pi i/3},\qquad (\gamma_n^{(1)})^2=a_ne^{-\pi
i/3},
\end{align}
and $a_n,b_n$ satisfy the coupled difference relations
\begin{align}\label{eq:an bn recur1}
a_n+a_{n+1}&=b_n^2, \\
3a_{n}(b_n+b_{n-1})&=n, \label{eq:an bn recur2}
\end{align}
with initial conditions
\begin{align}\label{eq:initial a b}
a_0=0, \qquad b_0=\frac{\Gamma(2/3)}{\Gamma(1/3)}.
\end{align}
Similarly, we have
\begin{align}\label{eq:betan2}
\beta_n^{(2)}=b_{n}e^{-\pi i/3},\qquad (\gamma_n^{(2)})^2=a_ne^{\pi
i/3},
\end{align}
with the same sequences $a_n$ and $b_n$.
\end{proposition}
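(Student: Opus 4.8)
The statement has two separable parts: the precise phase structure \eqref{eq:betatob gammatoa} and \eqref{eq:betan2} (equivalently, that $a_n := (\gamma_n^{(1)})^2 e^{\pi i/3}$ and $b_n := \beta_n^{(1)}e^{-\pi i/3}$ are real), and the difference relations \eqref{eq:an bn recur1}--\eqref{eq:an bn recur2} with the initial data \eqref{eq:initial a b}. My plan is to extract the phase structure from the two symmetries of the problem and then read off the difference relations by substituting the resulting ansatz into the string equations \eqref{eq:string 1}--\eqref{eq:string 2}. For the first symmetry (reflection), note that $x\mapsto\bar x$ fixes $\Gamma_0$, interchanges $\Gamma_1$ and $\Gamma_2$, and conjugates the weight, so the moments $\int_{\Gamma_0\cup\Gamma_2}x^k e^{-x^3}\,dx$ are the complex conjugates of $\int_{\Gamma_0\cup\Gamma_1}x^k e^{-x^3}\,dx$; since the recurrence coefficients are real rational functions of the moments, this gives $\beta_n^{(2)}=\overline{\beta_n^{(1)}}$ and $(\gamma_n^{(2)})^2=\overline{(\gamma_n^{(1)})^2}$, extending \eqref{eq:beta01} to all $n$. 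For the second symmetry (rotation), I use $(\omega x)^3=x^3$ with $\omega=e^{2\pi i/3}$: setting $r_n(x):=\omega^n p_n^{(2)}(\omega^2 x)$, the substitution $y=\omega^2 x$ rotates $\Gamma_0\cup\Gamma_1$ onto $\Gamma_0\cup\Gamma_2$, preserves the radial orientation, and leaves $e^{-x^3}$ invariant, so $r_n$ is monic of degree $n$ and orthogonal with respect to $e^{-x^3}$ on $\Gamma_0\cup\Gamma_1$; by uniqueness $r_n=p_n^{(1)}$. Feeding the relation $p_n^{(1)}(x)=\omega^n p_n^{(2)}(\omega^2 x)$ into the three-term recurrence \eqref{recurrence 1} then yields $\beta_n^{(1)}=\omega^{-2}\beta_n^{(2)}=\omega\,\beta_n^{(2)}$ and $(\gamma_n^{(1)})^2=\omega^{-4}(\gamma_n^{(2)})^2=\omega^2(\gamma_n^{(2)})^2$.

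Combining the two symmetries eliminates the system-$2$ coefficients and gives $\beta_n^{(1)}=\omega\,\overline{\beta_n^{(1)}}$ together with $(\gamma_n^{(1)})^2=\omega^2\,\overline{(\gamma_n^{(1)})^2}$. Writing $\beta_n^{(1)}=Re^{i\phi}$, the first relation forces $2\phi\equiv 2\pi/3 \pmod{2\pi}$, i.e.\ $\phi\equiv\pi/3 \pmod{\pi}$, so $\beta_n^{(1)}=b_n e^{\pi i/3}$ with $b_n\in\mathbb{R}$; in the same way the second relation gives $(\gamma_n^{(1)})^2=a_n e^{-\pi i/3}$ with $a_n\in\mathbb{R}$. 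This is \eqref{eq:betatob gammatoa}, and \eqref{eq:betan2} follows immediately from the reflection symmetry, since $b_n,a_n$ are real: $\beta_n^{(2)}=\overline{b_n e^{\pi i/3}}=b_n e^{-\pi i/3}$ and $(\gamma_n^{(2)})^2=\overline{a_n e^{-\pi i/3}}=a_n e^{\pi i/3}$.

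It remains to substitute this ansatz into \eqref{eq:string 1}--\eqref{eq:string 2}, where the phases conspire to cancel. Using $e^{2\pi i/3}=-e^{-\pi i/3}$, equation \eqref{eq:string 1} becomes $(a_{n+1}-b_n^2+a_n)e^{-\pi i/3}=0$, which is exactly \eqref{eq:an bn recur1}; using $e^{-\pi i/3}e^{\pi i/3}=1$, equation \eqref{eq:string 2} becomes $3a_n(b_{n-1}+b_n)=n$, which is \eqref{eq:an bn recur2}. The initial data are then read off directly: $\gamma_0^{(1)}=0$ gives $a_0=0$, and \eqref{eq:beta01} gives $b_0=\Gamma(2/3)/\Gamma(1/3)$, establishing \eqref{eq:initial a b}.

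The step I expect to require the most care is the rotational symmetry, specifically the rigorous bookkeeping of the rotated contour and its orientation and the appeal to uniqueness of monic orthogonal polynomials in this non-Hermitian setting; the reflection symmetry and the final substitution are routine once the phase is fixed. I would also remark that no degeneracy arises, because $(\gamma_n^{(1)})^2\neq 0$ for $n\geq 1$ is forced by \eqref{eq:string 2}, whose right-hand side $n$ is nonzero. As an alternative that avoids the symmetry argument altogether, the same conclusion follows by induction on $n$ directly from the string equations: the base case is \eqref{eq:initial a b}, and the phase-cancellation computation above, run forward through $(\gamma_{n+1}^{(1)})^2=-(\beta_n^{(1)})^2-(\gamma_n^{(1)})^2$ and $\beta_{n+1}^{(1)}=(n+1)/\bigl(3(\gamma_{n+1}^{(1)})^2\bigr)-\beta_n^{(1)}$, propagates both the ansatz and the reality of $a_n,b_n$ from step $n$ to step $n+1$.
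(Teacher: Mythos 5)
Your proposal is correct, but your primary argument follows a genuinely different route from the paper's. The paper proves the proposition by induction on $n$: assuming $\beta_k^{(1)}=b_ke^{\pi i/3}$ and $(\gamma_k^{(1)})^2=a_ke^{-\pi i/3}$ with $a_k,b_k$ real for $k\le n$, it solves the string equation \eqref{eq:string 1} for $(\gamma_{n+1}^{(1)})^2$ and \eqref{eq:string 2} for $\beta_{n+1}^{(1)}$, observes that the phases come out right with real coefficients $a_{n+1}=b_n^2-a_n$ and $b_{n+1}=(n+1)/(3a_{n+1})-b_n$, and reads off the coupled recurrences --- this is precisely the ``alternative'' you sketch in your closing remark. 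Your main argument instead establishes the phase structure a priori from two symmetries of the contours (complex conjugation exchanging $\Gamma_1$ and $\Gamma_2$, and rotation by $\omega^2$ combined with $p_n^{(1)}(x)=\omega^n p_n^{(2)}(\omega^2x)$), and only then substitutes the ansatz into the string equations. Both are sound. The symmetry route is more conceptual --- it explains \emph{why} the phases are $e^{\pm\pi i/3}$ independently of the string equations and yields \eqref{eq:betan2} for free --- but it costs you the contour bookkeeping you flag (note that $\omega^2$ actually reverses the stated orientation of both rays, which is harmless since an overall sign does not affect orthogonality) and an appeal to existence and uniqueness of the monic orthogonal polynomials, i.e.\ nonvanishing of the relevant Hankel determinants, which the induction route does not need beyond what is already implicit in the string equations. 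The paper's induction is shorter and self-contained given \eqref{eq:string 1}--\eqref{eq:string 2} and the $n=0$ data; your phase-cancellation computation at the end is identical to the paper's inductive step.
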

 From \eqref{eq:an bn recur2}, one can easily eliminate $a_n$ in
\eqref{eq:an bn recur1} and obtain
\begin{equation}
\frac{n}{b_{n-1}+b_{n}}+\frac{n+1}{b_{n}+b_{n+1}}=3b_n^2.
\end{equation}
This difference equation belongs to $A_1^c$-type equation on the
list of discrete Painlev\'{e} equations by Grammaticos and Ramani
\cite{GR,GR2}, which has a connection with the second Painlev\'{e}
equation. It is also an alternative discrete Painlev\'e I equation
in Clarkson's list \cite[Appendix A.4]{wva-C}, see also \cite{FGR},
\cite{NSKGR}.
We give a short derivation of the string equations \eqref{eq:string 1}--\eqref{eq:string 2}
in the Appendix, where we also deal with the more general weight $e^{-x^3+tx}$.

\subsection{Multiple orthogonal polynomials with an exponential cubic weight}
Multiple orthogonal polynomials are polynomials of one variable
which are defined by orthogonality relations with respect to $r$
different measures $\mu_1,\mu_2,\allowbreak \ldots,\mu_r$, where $r
\geq 1$ is a positive integer. As a generalization of orthogonal
polynomials, multiple orthogonal polynomials originated from
Hermite-Pad\'e approximation in the context of irrationality and
transcendence proofs in number theory. They were further developed
in approximation theory, we refer to Aptekarev et al.
\cite{Apt,AptBraWVA}, Coussement and Van Assche \cite{WVAEC},
Nikishin and Sorokin \cite[Chapter 4, \S 3]{NikSor}, and Ismail
\cite[Chapter 23]{Ismail} for more information.

We take $r=2$ and for $(k,l)\in\mathbb{N}^2$, we are interested in
the monic polynomials $P_{k,l}$ of degree $k+l$ which satisfy the
orthogonality conditions
\begin{align}
\int_{\Gamma_0\cup\Gamma_1} x^i P_{k,l}(x) e^{-x^3}dx &=0,
\qquad i = 0, 1, \ldots, k-1, \label{def:ortho1}\\
\int_{\Gamma_0\cup\Gamma_2} x^i P_{k,l}(x) e^{-x^3}dx &=0, \qquad i
= 0, 1, \ldots, l-1. \label{def:ortho2}
\end{align}
We call $P_{k,l}$ the (type II) multiple orthogonal polynomial for
the exponential cubic weight. If one of $k$ and $l$ is equal to
zero, then $P_{k,l}$ reduce to the usual orthogonal polynomials with
respect to the exponential cubic weight $e^{-x^3}$, i.e.,
\begin{equation}\label{eq:kl=0}
P_{k,0}(x)=p_{k}^{(1)}(x), \qquad P_{0,k}(x)=p_{k}^{(2)}(x),
\end{equation}
where $p_{k}^{(i)}$, $i=1,2$ are defined in Section \ref{sec:op}. It
is the aim of this paper to derive some basic properties of
$P_{k,l}$. Our main results are

\begin{theorem}[Rodrigues formula]\label{thm:Rodrigues}
Let $n,m \in\mathbb{N}= \{0,1,2,3,\ldots\}$, then
\begin{align}
e^{-x^3} P_{n,n+m}(x)&=\frac{(-1)^n}{3^n} \frac{d^n}{dx^n}\left
(e^{-x^3}P_{0,m}(x) \right), \label{eq:rodri 1}\\
e^{-x^3} P_{n+m,n}(x)&=\frac{(-1)^n}{3^n} \frac{d^n}{dx^n}\left
(e^{-x^3}P_{m,0}(x) \right) \label{eq:rodri 2}.
\end{align}
where $P_{0,m}(x)$ and $P_{m,0}(x)$ are given in \eqref{eq:kl=0}.
\end{theorem}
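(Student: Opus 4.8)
The plan is to verify that the right-hand side of \eqref{eq:rodri 1}, after dividing by $e^{-x^3}$, is a monic polynomial of the correct degree satisfying exactly the orthogonality conditions that characterize $P_{n,n+m}$, and then to invoke uniqueness. Concretely, I would set
\[
Q_n(x) := \frac{(-1)^n}{3^n}\, e^{x^3}\frac{d^n}{dx^n}\left(e^{-x^3} P_{0,m}(x)\right),
\]
and first show that $Q_n$ is monic of degree $2n+m$. This is cleanest by recording the single step as a differential operator: since $\frac{d}{dx}\bigl(e^{-x^3}f\bigr)=e^{-x^3}(f'-3x^2 f)$, one has $-\tfrac13 e^{x^3}\frac{d}{dx}\bigl(e^{-x^3}f\bigr)=L[f]$ with $L[f]=x^2 f-\tfrac13 f'$, and the factors $e^{\pm x^3}$ telescope so that $Q_n=L^n[P_{0,m}]$. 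Because $L$ raises the degree by exactly two and preserves the leading coefficient, $n$-fold application to the monic polynomial $P_{0,m}$ of degree $m$ gives a monic polynomial of degree $m+2n=2n+m$, which is precisely $\deg P_{n,n+m}$.

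Next I would establish the two families of orthogonality relations by integrating by parts $n$ times. Over $\Gamma_0\cup\Gamma_1$ one writes $\int x^i Q_n e^{-x^3}\,dx=\frac{(-1)^n}{3^n}\int x^i \frac{d^n}{dx^n}\bigl(e^{-x^3}P_{0,m}\bigr)\,dx$ and moves all $n$ derivatives onto $x^i$; since $\frac{d^n}{dx^n}x^i=0$ for $0\le i\le n-1$, these integrals vanish, giving \eqref{def:ortho1} with $k=n$. Over $\Gamma_0\cup\Gamma_2$ the same $n$ integrations by parts leave $\frac{1}{3^n}\frac{i!}{(i-n)!}\int x^{i-n}P_{0,m}e^{-x^3}\,dx$ when $i\ge n$ (and zero when $i<n$); for $n\le i\le n+m-1$ the exponent $i-n$ runs through $0,\dots,m-1$, so these integrals vanish by the defining orthogonality of $P_{0,m}=p_m^{(2)}$ over $\Gamma_0\cup\Gamma_2$ recorded in \eqref{eq:kl=0}. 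This gives \eqref{def:ortho2} with $l=n+m$. Since $Q_n$ is monic of degree $2n+m$ and meets every condition characterizing $P_{n,n+m}$, uniqueness forces $Q_n=P_{n,n+m}$, which is \eqref{eq:rodri 1}; formula \eqref{eq:rodri 2} follows by interchanging the roles of the two contours, equivalently of the indices $k$ and $l$.

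The step requiring the most care is the vanishing of the boundary terms in each integration by parts. Here I would use that on every ray $\Gamma_k$ one has $x=r\omega^k$ with $x^3=r^3$, so $e^{-x^3}=e^{-r^3}$ and every integrand (a polynomial times $e^{-x^3}$ or a derivative thereof) decays super-exponentially at the two endpoints at infinity. The only other potential contribution is at the origin where the two rays meet; with the orientation making $\Gamma_0\cup\Gamma_1$ (resp.\ $\Gamma_0\cup\Gamma_2$) a single contour running from infinity through $0$ to infinity, the origin is an interior point and the one-sided contributions cancel, so no boundary term survives. The remaining ingredient is the existence and uniqueness of the monic polynomial satisfying \eqref{def:ortho1}--\eqref{def:ortho2}, which is exactly what legitimizes the final identification $Q_n=P_{n,n+m}$.
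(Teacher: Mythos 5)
Your proof is correct and follows essentially the same route as the paper: your operator $L[f]=x^2f-\tfrac13 f'$ is exactly the paper's difference-differential equation \eqref{eq:difference-diff eq} used to establish monicity and degree by induction, and the orthogonality conditions are obtained by the same integrations by parts, with the extra $m$ conditions over $\Gamma_0\cup\Gamma_2$ reduced to the orthogonality of $P_{0,m}=p_m^{(2)}$ in the same way. The only presentational difference is at the origin: the paper evaluates the boundary terms on each ray separately and observes that they cancel in pairs, whereas you treat $\Gamma_0\cup\Gamma_1$ (resp.\ $\Gamma_0\cup\Gamma_2$) as a single contour through the origin so that no interior boundary term arises — the same cancellation, differently packaged.
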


The polynomials $P_{n,n}(x)$ were already mentioned by P\'olya and Szeg\"o in their problem book \cite[Part V, Chapter 1, Problem 59]{PS} and
P\'olya investigated their zeros in \cite[Satz IV]{Polya}. They are also a special case of polynomials introduced by Gould and Hopper \cite{GouldHopper}
and were investigated, among others, by Dominici \cite{Dom} and Paris \cite{Paris}. Their multiple orthogonality (or $d$-orthogonality, if one
only considers the diagonal polynomials) was already noted earlier, see e.g., \cite{BCBR} and references there. In this paper we are investigating the full
range of polynomials $P_{n,m}(x)$ and not only the diagonal polynomials, but we  obtain ratio asymptotics and the distribution
of the zeros for the diagonal polynomials in Section \ref{sec:zeros}. For asymptotic approximations and an asymptotic expansion of $P_{n,n}(x)$
we refer to \cite{Dom} and \cite{Paris}.

Multiple orthogonal polynomials satisfy a system of nearest-neighbor
recurrence relations \cite[Theorem 23.7]{Ismail}. For $P_{k,l}$
defined in \eqref{def:ortho1}--\eqref{def:ortho2} we can represent
the recurrence coefficients explicitly in terms of the sequences
$a_n$ and $b_n$ in Proposition \ref{prop:recu}, as stated in the
following theorem.

\begin{theorem}[the nearest-neighbor recurrence relations]\label{thm:nearest recurrence}
Let $n,m \in\mathbb{N}$, then
\begin{align}
    xP_{n,n+m}(x) = P_{n+1,n+m}(x)&+c_{n,n+m}P_{n,n+m}(x) \nonumber \\
    &+a_{n,n+m} P_{n-1,n+m}(x) + b_{n,n+m} P_{n,n+m-1}(x), \label{eq:n+1} \\
    xP_{n,n+m}(x) = P_{n,n+m+1}(x)&+d_{n,n+m}P_{n,n+m}(x) \nonumber \\
    &+ a_{n,n+m} P_{n-1,n+m}(x) + b_{n,n+m} P_{n,n+m-1}(x), \label{eq:n+m+1}
\end{align}
where
\begin{align}
c_{n,n+m}&=\left\{
            \begin{array}{ll}
              \frac{\Gamma(2/3)}{\Gamma(1/3)}e^{\pi i/3}, & \hbox{$m=0$,} \\
              -b_{m-1}e^{-\pi i/3}, & \hbox{$m>0$,}
            \end{array}
          \right.
\label{eq:c(n,n+m)}
\\
d_{n,n+m}&=b_{m}e^{-\pi i/3},
\label{eq:d(n,n+m)}\\
a_{n,n+m}&=\left\{
            \begin{array}{ll}
              -\frac{n}{3\sqrt{3}}\frac{\Gamma(1/3)}{\Gamma(2/3)}i, & \hbox{$m=0$,} \\
              -\frac{na_{m}}{m}e^{\pi i/3}, & \hbox{$m>0$,}
            \end{array}
          \right.
\label{eq:a(n,n+m)}\\
b_{n,n+m}&=\left\{
            \begin{array}{ll}
              \frac{n}{3\sqrt{3}}\frac{\Gamma(1/3)}{\Gamma(2/3)}i, & \hbox{$m=0$,} \\
              \frac{(n+m)a_m}{m}e^{\pi i/3}, & \hbox{$m>0$.}
            \end{array}
          \right.
\label{eq:b(n,n+m)}
\end{align}
Similarly,
\begin{align}
    xP_{n+m,n}(x) = P_{n+m+1,n}(x)&+c_{n+m,n}P_{n+m,n}(x) \nonumber \\
    &+a_{n+m,n} P_{n+m-1,n}(x) + b_{n+m,n} P_{n+m,n-1}(x),  \label{eq:n+1 2}\\
    xP_{n+m,n}(x) = P_{n+m,n+1}(x)&+d_{n+m,n}P_{n+m,n}(x) \nonumber \\
    &+ a_{n+m,n} P_{n+m-1,n}(x) + b_{n+m,n} P_{n+m,n-1}(x), \label{eq:n+m+1 2}
\end{align}
where
\begin{align}
c_{n+m,n}&=b_{m}e^{\pi i/3},
\\
d_{n+m,n}&=\left\{
            \begin{array}{ll}
              \frac{\Gamma(2/3)}{\Gamma(1/3)}e^{-\pi i/3}, & \hbox{$m=0$,} \\
              -b_{m-1}e^{\pi i/3}, & \hbox{$m>0$,}
            \end{array}
          \right.
\\
a_{n+m,n}&=\left\{
            \begin{array}{ll}
              -\frac{n}{3\sqrt{3}}\frac{\Gamma(1/3)}{\Gamma(2/3)}i, & \hbox{$m=0$,} \\
              \frac{(n+m)a_m}{m}e^{-\pi i/3}, & \hbox{$m>0$,}
            \end{array}
          \right.
\\
b_{n+m,n}&=\left\{
            \begin{array}{ll}
              \frac{n}{3\sqrt{3}}\frac{\Gamma(1/3)}{\Gamma(2/3)}i, & \hbox{$m=0$,} \\
              -\frac{na_m}{m}e^{-\pi i/3}, & \hbox{$m>0$.}
            \end{array}
          \right.
\end{align}
Here, $a_n$ and $b_n$ are the two real sequences generated from
\eqref{eq:an bn recur1}--\eqref{eq:initial a b}.
\end{theorem}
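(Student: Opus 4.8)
The plan is to take the existence and the general shape of the recurrences from Ismail \cite[Theorem 23.7]{Ismail}, whose normalization guarantees that the coefficients of $P_{n-1,n+m}$ and of $P_{n,n+m-1}$ are common to both \eqref{eq:n+1} and \eqref{eq:n+m+1}; only $c_{n,n+m},d_{n,n+m},a_{n,n+m},b_{n,n+m}$ then have to be identified. Throughout I would use Theorem \ref{thm:Rodrigues} together with the elementary identity $\frac{d}{dx}\big(e^{-x^3}f\big)=e^{-x^3}\mathcal{L}[f]$, where $\mathcal{L}[f]:=f'-3x^2f$, so that $P_{n,n+m}=\frac{(-1)^n}{3^n}\mathcal{L}^n[P_{0,m}]$. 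Finally, since $\overline{\Gamma_1}=\Gamma_2$ while $e^{-x^3}$ is real on $\mathbb{R}$, one has $P_{l,k}(x)=\overline{P_{k,l}(\bar x)}$; conjugating \eqref{eq:n+1}--\eqref{eq:n+m+1} and replacing $x$ by $\bar x$ then yields \eqref{eq:n+1 2}--\eqref{eq:n+m+1 2} with all coefficients conjugated (and $e^{\pi i/3}\leftrightarrow e^{-\pi i/3}$), so I only need to treat the first pair.

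For the diagonal coefficients I would compare coefficients of $x^{2n+m}$ in \eqref{eq:n+1} and \eqref{eq:n+m+1}, which gives $c_{n,n+m}=\kappa_{n,n+m}-\kappa_{n+1,n+m}$ and $d_{n,n+m}=\kappa_{n,n+m}-\kappa_{n,n+m+1}$, where $\kappa_{k,l}$ denotes the subleading coefficient of $P_{k,l}$. The key observation is that $\mathcal{L}$ multiplies both the leading and the subleading coefficient of any polynomial by $-3$, so from $P_{n,n+m}=\frac{(-1)^n}{3^n}\mathcal{L}^n[P_{0,m}]$ the subleading coefficient of $P_{n,n+m}$ equals that of $P_{0,m}=p_m^{(2)}$, namely $-\sum_{j=0}^{m-1}\beta_j^{(2)}$ (and, in the lower wedge, that of $P_{m,0}=p_m^{(1)}$). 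Taking differences yields $d_{n,n+m}=\beta_m^{(2)}=b_me^{-\pi i/3}$ for all $m$, $c_{n,n+m}=-\beta_{m-1}^{(2)}=-b_{m-1}e^{-\pi i/3}$ for $m\ge1$, and $c_{n,n}=\beta_0^{(1)}=\frac{\Gamma(2/3)}{\Gamma(1/3)}e^{\pi i/3}$, matching \eqref{eq:c(n,n+m)}--\eqref{eq:d(n,n+m)}.

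The lowering coefficients I would isolate by testing against monomials. Multiplying \eqref{eq:n+1} by $x^{n+m-1}$ and integrating over $\Gamma_0\cup\Gamma_2$, orthogonality leaves $b_{n,n+m}=h^{(2)}_{n,n+m}/h^{(2)}_{n,n+m-1}$ with $h^{(2)}_{k,l}:=\int_{\Gamma_0\cup\Gamma_2}x^{l}P_{k,l}e^{-x^3}dx$. Using \eqref{eq:rodri 1} and integrating by parts $n$ times (the boundary terms vanish because the contour joins two directions of decay of $e^{-x^3}$ through the origin) gives $h^{(2)}_{n,n+m}=\frac{(n+m)!}{m!\,3^n}\int_{\Gamma_0\cup\Gamma_2}(p_m^{(2)})^2e^{-x^3}dx$; the ratio of successive norms is $(\gamma_m^{(2)})^2$, whence $b_{n,n+m}=\frac{n+m}{m}(\gamma_m^{(2)})^2=\frac{(n+m)a_m}{m}e^{\pi i/3}$ for $m\ge1$, as in \eqref{eq:b(n,n+m)}. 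The same device with $x^{n-1}$ over $\Gamma_0\cup\Gamma_1$ gives $a_{n,n+m}=h^{(1)}_{n,n+m}/h^{(1)}_{n-1,n+m}$, and Rodrigues plus integration by parts now produce $h^{(1)}_{n,n+m}=\frac{n!}{3^n}I_m$ with the \emph{cross integral} $I_m:=\int_{\Gamma_0\cup\Gamma_1}p_m^{(2)}(x)e^{-x^3}dx$, so that $a_{n,n+m}=\frac{n}{3}\,I_m/I_{m+1}$.

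The crux, and where I expect the real work, is the evaluation of $I_m$. My plan is to use the lowering ladder relation for the cubic weight (derived as in the Appendix, and equivalent to the string equations) in the form $(p_m^{(2)})'=m\,p_{m-1}^{(2)}+3(\gamma_m^{(2)})^2(\gamma_{m-1}^{(2)})^2\,p_{m-2}^{(2)}$, together with $\int_{\Gamma_0\cup\Gamma_1}\frac{d}{dx}\big(p_m^{(2)}e^{-x^3}\big)dx=0$, i.e. $\int_{\Gamma_0\cup\Gamma_1}(p_m^{(2)})'e^{-x^3}dx=3\int_{\Gamma_0\cup\Gamma_1}x^2p_m^{(2)}e^{-x^3}dx$. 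Expanding the left-hand side by the ladder relation and the right-hand side by applying the three-term recurrence \eqref{recurrence} twice, and integrating termwise against $e^{-x^3}$ over $\Gamma_0\cup\Gamma_1$, turns this into a relation among $I_{m+2},\dots,I_{m-2}$ in which three cancellations occur: the $I_{m-2}$ terms cancel identically; the coefficient of $I_m$ equals $(\gamma_{m+1}^{(2)})^2+(\beta_m^{(2)})^2+(\gamma_m^{(2)})^2$ and vanishes by the first string equation \eqref{eq:string 1}; and the $I_{m-1}$ terms cancel precisely because the second string equation \eqref{eq:string 2} gives $3(\gamma_m^{(2)})^2(\beta_{m-1}^{(2)}+\beta_m^{(2)})=m$. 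What survives is the first-order recursion $I_{m+1}=-(\beta_{m-1}^{(2)}+\beta_m^{(2)})I_m$ for $m\ge1$; writing $\beta_j^{(2)}=b_je^{-\pi i/3}$ and using $3a_m(b_{m-1}+b_m)=m$ gives $I_m/I_{m+1}=-\frac{3a_m}{m}e^{\pi i/3}$, hence $a_{n,n+m}=-\frac{na_m}{m}e^{\pi i/3}$ for $m\ge1$, matching \eqref{eq:a(n,n+m)}. The remaining inputs are elementary: the two initial values $I_0,I_1$ and the boundary case $m=0$ (where $P_{n,n}=\frac{(-1)^n}{3^n}\frac{d^n}{dx^n}e^{-x^3}$ and $P_{n,n-1}$ comes from \eqref{eq:rodri 2}) follow by evaluating the moments $\int_{\Gamma_k}x^j e^{-x^3}dx$, each a constant multiple of $\Gamma(\tfrac{j+1}{3})$, and yield $a_{n,n}=-\frac{n}{3\sqrt{3}}\frac{\Gamma(1/3)}{\Gamma(2/3)}i=-b_{n,n}$. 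The single genuine obstacle is thus the threefold cancellation above; once one sees that it is driven by the two string equations, the rest is bookkeeping, and \eqref{eq:n+1 2}--\eqref{eq:n+m+1 2} come for free by conjugation.
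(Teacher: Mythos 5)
Your proposal is correct, and for the coefficients $c_{n,n+m}$, $d_{n,n+m}$ and $b_{n,n+m}$ it coincides with the paper's argument: the paper likewise reads off $c$ and $d$ from the subleading coefficients (which are invariant under the Rodrigues shift --- exactly your observation that your operator $\mathcal{L}$ scales the top two coefficients by $-3$), and obtains $b_{n,n+m}$ from the ratio of integrals over $\Gamma_0\cup\Gamma_2$ via integration by parts and the norms of $p_m^{(2)}$. Where you genuinely depart from the paper is the treatment of $a_{n,n+m}$ for $m>0$. The paper writes down the same representation $a_{n,n+m}=\tfrac{n}{3}I_m/I_{m+1}$ with $I_m=\int_{\Gamma_0\cup\Gamma_1}P_{0,m}(x)e^{-x^3}\,dx$, declares it not suitable for direct calculation because it mixes the two contours, and instead extracts the \emph{sum} $a_{n,n+m}+b_{n,n+m}$ by comparing the coefficient of $x^{2n+m-1}$ in \eqref{eq:n+1}; the string equation \eqref{eq:string 1} collapses that sum to $(\gamma_m^{(2)})^2$, and $a_{n,n+m}$ follows by subtracting the known $b_{n,n+m}$. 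You instead attack $I_m$ head on: integrating the lowering relation $(p_m^{(2)})'=m\,p_{m-1}^{(2)}+3(\gamma_m^{(2)})^2(\gamma_{m-1}^{(2)})^2p_{m-2}^{(2)}$ against $e^{-x^3}$ over $\Gamma_0\cup\Gamma_1$ and expanding $3x^2p_m^{(2)}$ by two applications of the three-term recurrence does produce precisely the cancellations you predict --- the $I_{m-2}$ terms identically, the $I_m$ term by \eqref{eq:string 1}, the $I_{m-1}$ terms by \eqref{eq:string 2} --- leaving the first-order recursion $I_{m+1}=-(\beta_{m-1}^{(2)}+\beta_m^{(2)})I_m$ and hence the stated value of $a_{n,n+m}$; I verified this computation and it is sound. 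The trade-off is that the paper's route is shorter bookkeeping (one more coefficient comparison plus one string equation), whereas yours costs the ladder-operator machinery of the Appendix but yields the extra information of a closed recursion for the cross integrals $I_m$ themselves. Your conjugation argument $P_{l,k}(x)=\overline{P_{k,l}(\bar x)}$ for the second family of relations is a clean substitute for the paper's ``proved in a similar manner,'' and is justified since $\overline{\Gamma_0\cup\Gamma_1}=\Gamma_0\cup\Gamma_2$ and the weight is conjugation-symmetric.
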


It is also easy to check that the recurrence coefficients derived in
Theorem \ref{thm:nearest recurrence} satisfy the partial difference
equations obtained in \cite[Theorem 3.2]{WVA}.

The rest of this paper is organized as follows. Theorems  \ref{thm:Rodrigues}
and \ref{thm:nearest recurrence}
will be proved in Section \ref{sec:proof}. The string
equation \eqref{eq:string 1} plays a particular role in the derivation of the
coefficients in the nearest-neighbor recurrence relations. We then
perform a numerical study of the coefficients $a_n$, $b_n$ in Section \ref{sec:asy of an and bn}.
The study suggests that $a_{n+1}$
and $b_n$, $n\in\mathbb{N}$ are all strictly positive, and the
limits of $a_n/n^{2/3}$ and $b_n/n^{1/3}$ exist as $n\to\infty$,
and we can identify these limits explicitly.
Section \ref{sec:zeros} deals with the zeros of $P_{k,l}$. We will give precise
location and interlacing results for the zeros of the diagonal multiple orthogonal polynomials
$P_{n,n}$ and asymptotic results for the ratio of diagonal multiple orthogonal polynomials. The latter
allows us to find the asymptotic distribution of the scaled zeros for these diagonal multiple orthogonal
polynomials. The zeros of $P_{k,l}$, with $k\neq l$, have a more interesting structure, which depends on the
limit of the ratio $k/l$. We investigate these zeros numerically and end this paper with some conclusions
and outlook.

\section{Proofs}\label{sec:proof}

\subsection{Proof of Proposition \ref{prop:recu}}
This proposition can be proved by induction on the index $n$. When
$n=0$, the relation \eqref{eq:betatob gammatoa} is obvious, which
also gives the initial conditions \eqref{eq:initial a b}. Suppose we
have
\begin{align}
\beta_k^{(1)}=b_{k}e^{\pi i/3},\qquad (\gamma_k^{(1)})^2=a_ke^{-\pi
i/3},
\end{align}
and $(a_k,b_k)\in\mathbb{R}^2$ for $k\leq n$. From \eqref{eq:string
1}, it follows that
\begin{align}
(\gamma_{n+1}^{(1)})^2&=-((\gamma_n^{(1)})^2+(\beta_n^{(1)})^2)
\nonumber
\\
&=-a_ne^{-\pi i/3}-b_n^{2}e^{2\pi i/3}=(b_n^2-a_n)e^{-\pi i/3},
\end{align}
thus,
\begin{equation}\label{eq:an recu}
a_{n+1}=b_n^2-a_n\in\mathbb{R}.
\end{equation}
On the other hand, the equation \eqref{eq:string 2} implies that
\begin{align}
\beta_{n+1}^{(1)}=\frac{n+1}{3(\gamma_{n+1}^{(1)})^2}-\beta_n^{(1)}
=\left(\frac{n+1}{3a_{n+1}}-b_n\right)e^{\pi i/3},
\end{align}
thus
\begin{equation}\label{eq:bn recu}
b_{n+1}=\frac{n+1}{3a_{n+1}}-b_n\in\mathbb{R}.
\end{equation}
The coupled difference equations \eqref{eq:an bn
recur1}--\eqref{eq:an bn recur2} are immediate from \eqref{eq:an
recu} and \eqref{eq:bn recu}.

The claim for $\beta_n^{(2)}$ and $(\gamma_n^{(2)})^2$ can be proved
similarly, we omit the details here.


\subsection{Proof of Theorem \ref{thm:Rodrigues}}
We shall only prove \eqref{eq:rodri 1} since the proof of
\eqref{eq:rodri 2} is similar.

We first show that $P_{n,n+m}$ defined in \eqref{eq:rodri 1} is a
monic polynomial of degree $2n+m$. Observe that
\begin{equation*}
\frac{(-1)^n}{3^n}\frac{d^n}{dx^n}\left(e^{-x^3}P_{0,m}\right)=
\frac{(-1)^n}{3^n}\left(\frac{d^{n-1}}{dx^{n-1}}\left(e^{-x^3}P_{0,m}\right)\right)'
=-\frac{1}{3}(e^{-x^3}P_{n-1,n+m-1}(x))',
\end{equation*}
we then obtain from \eqref{eq:rodri 1} the following
difference-differential equation for $P_{n,n+m}$:
\begin{equation}\label{eq:difference-diff eq}
P_{n,n+m}(x)=x^2P_{n-1,n+m-1}(x)-\frac{1}{3}P_{n-1,n+m-1}'(x).
\end{equation}
We can now use induction on $n$. Clearly $P_{0,m}=p_m^{(2)}$ is a
monic polynomial of degree $m$. Suppose that $P_{n-1,m+n-1}$ is a
monic polynomial of degree $2n+m-2$, then  \eqref{eq:difference-diff
eq} implies that $P_{n,n+m}$ is a monic polynomial of degree $2n+m$.

Next, we show that $P_{n,n+m}$ satisfies the orthogonality
conditions \eqref{def:ortho1}--\eqref{def:ortho2}. With $\Gamma_0$
defined in \eqref{def:contour}, it follows from \eqref{eq:rodri 1}
and integration by parts $k$ times that
\begin{align}\label{eq:int on Gamma0}
\int_{\Gamma_0} x^k P_{n,n+m}(x) e^{-x^3}\,dx
&=\frac{(-1)^n}{3^n}\int_{\Gamma_0} x^k
\frac{d^n}{dx^n}\left(e^{-x^3}P_{0,m}(x) \right)\,dx \nonumber \\
&=-\frac{(-1)^{n+k}k!}{3^n}\frac{d^{n-k-1}}{dx^{n-k-1}}\left(e^{-x^3}P_{0,m}(x)\right)\Big|_{x=0}
\nonumber \\
&=\frac{k!}{3^{k+1}}P_{n-k-1,n+m-k-1}(0),
\end{align}
for $k=0,1,\ldots,n-1$. Similarly, it is easily seen that
\begin{align}\label{eq:int on Gamma1}
\int_{\Gamma_1} x^k P_{n,n+m}(x) e^{-x^3}\,dx=\int_{\Gamma_2} x^k
P_{n,n+m}(x) e^{-x^3}\,dx =-\frac{k!}{3^{k+1}}P_{n-k-1,n+m-k-1}(0).
\end{align}
Combining \eqref{eq:int on Gamma0} and \eqref{eq:int on Gamma1}
gives
\begin{align*}
\int_{\Gamma_0\cup\Gamma_1} x^k P_{n,n+m}(x) e^{-x^3}\, dx &=0,
\qquad k = 0, 1, \ldots, n-1, \\
\int_{\Gamma_0\cup\Gamma_2} x^k P_{n,n+m}(x) e^{-x^3}\, dx &=0,
\qquad k = 0, 1, \ldots, n-1.
\end{align*}
We still need $m$ more orthogonality conditions to complete
\eqref{def:ortho2}, but these follow from
\begin{align*}
\int_{\Gamma_0\cup \Gamma_2} x^{n+k} P_{n,n+m}(x) e^{-x^3}\,dx
&=\frac{(-1)^n}{3^n}\int_{\Gamma_0\cup \Gamma_2} x^{n+k}
\frac{d^n}{dx^n}\left(e^{-x^3}P_{0,m}(x) \right)\,dx  \\
&=\frac{(n+k)!}{k!3^n}\int_{\Gamma_0 \cup \Gamma_2} x^{k}
e^{-x^3}P_{0,m}(x) \, dx = 0
\end{align*}
for $k=0,1,\ldots,m-1$, where we used the fact that
$P_{0,m}=p^{(2)}_m$ is the orthogonal polynomial for the cubic
exponential weight on $\Gamma_0\cup \Gamma_2$.

\subsection{Proof of Theorem \ref{thm:nearest recurrence}}
We will present the proof of \eqref{eq:n+1}--\eqref{eq:b(n,n+m)},
the remaining part of the theorem can be proved in a similar manner.

Let us denote the coefficients of $x^{k+l-1}$ and $x^{k+l-2}$ in
$P_{k,l}$ by $\delta_{k,l}$ and $\varepsilon_{k,l}$, respectively,
i.e.,
\begin{equation}\label{eq:deltanm}
P_{k,l}(x)=x^{k+l}+\delta_{k,l}x^{k+l-1}+\varepsilon_{k,l}x^{k+l-2}+\cdots.
\end{equation}
Substituting the above formula into \eqref{eq:difference-diff eq}
and comparing the coefficients of $x^{2n+m-1}$ and $x^{2n+m-2}$ on
both sides leads to
\begin{equation*}
\delta_{n,n+m}=\delta_{n-1,n+m-1},\qquad
\varepsilon_{n,n+m}=\varepsilon_{n-1,n+m-1},
\end{equation*}
thus,
\begin{equation}\label{eq:rel delta}
\delta_{n,n+m}=\delta_{0,m},\qquad
\varepsilon_{n,n+m}=\varepsilon_{0,m},
\end{equation}
for $m\in\mathbb{N}$. Similarly, we have
\begin{equation}\label{eq:differen-diff eq2}
P_{n+m,n}(x)=x^2P_{n+m-1,n-1}(x)-\frac{1}{3}P_{n+m-1,n-1}'(x),
\end{equation}
which implies
\begin{equation}\label{eq:rel delta2}
\delta_{n+m,n}=\delta_{m,0},\qquad
\varepsilon_{n+m,n}=\varepsilon_{m,0}.
\end{equation}
If we insert \eqref{eq:deltanm} into
\eqref{eq:n+1}--\eqref{eq:n+m+1}, then the coefficients of second
leading term $x^{2n+m}$ give
\begin{align}\label{eq:c(n,n+m) mid}
c_{n,n+m}&=\delta_{n,n+m}-\delta_{n+1,n+m}=\left\{
                                            \begin{array}{ll}
                                              \delta_{0,0}-\delta_{1,0}, & \hbox{$m=0$,} \\
                                              \delta_{0,m}-\delta_{0,m-1}, & \hbox{$m>1$,}
                                            \end{array}
                                          \right.
\\
d_{n,n+m}&=\delta_{n,n+m}-\delta_{n,n+m+1}=\delta_{0,m}-\delta_{0,m+1},
\label{eq:d(n,n+m) mid}
\end{align}
where we have also made use of the first equalities in \eqref{eq:rel
delta} and \eqref{eq:rel delta2}. On account of the facts that
\begin{align}
x P_{m,0}(x) &= P_{m+1,0}(x) + \beta_m^{(1)} P_{m,0}(x) +
(\gamma_m^{(1)})^2 P_{m-1,0}(x),
\\
x P_{0,m}(x) &= P_{0,m+1}(x) + \beta_m^{(2)} P_{0,m}(x) +
(\gamma_m^{(2)})^2 P_{0,m-1}(x), \label{eq:recu 2}
\end{align}
(see \eqref{eq:kl=0} and \eqref{recurrence 1}), it is immediate that
\begin{eqnarray}
\delta_{m,0}&=&\delta_{m+1,0}+\beta_{m}^{(1)}=\delta_{m+1,0}+b_{m}e^{\pi
i/3}, \label{eq:diff delta1} \\
\delta_{0,m}&=&\delta_{0,m+1}+\beta_{m}^{(2)}=\delta_{0,m+1}+b_{m}e^{-\pi
i/3},  \label{eq:diff delta2}
\end{eqnarray}
in view of \eqref{eq:betatob gammatoa} and \eqref{eq:betan2}. The
values for $c_{n,n+m}$, $d_{n,n+m}$ in
\eqref{eq:c(n,n+m)}--\eqref{eq:d(n,n+m)} then follow from combining
\eqref{eq:c(n,n+m) mid}, \eqref{eq:d(n,n+m) mid} and \eqref{eq:diff
delta1}--\eqref{eq:diff delta2}.

We now establish the equalities
\eqref{eq:a(n,n+m)}--\eqref{eq:b(n,n+m)} for $a_{n,n+m}$ and
$b_{n,n+m}$. Multiplying both sides of \eqref{eq:n+1} by
$x^{n+m-1}e^{-x^3}$ and integrating the equality over
$\Gamma_0\cup\Gamma_2$, the orthogonality condition
\eqref{def:ortho2} implies
\begin{equation}\label{eq:int of b(n,n+m)}
b_{n,n+m}=\frac{\ds\int_{\Gamma_0\cup\Gamma_2}x^{n+m}P_{n,n+m}(x)e^{-x^3}dx}
{\ds \int_{\Gamma_0\cup\Gamma_2}x^{n+m-1}P_{n,n+m-1}(x)e^{-x^3}dx}.
\end{equation}
By \eqref{eq:rodri 1}, \eqref{eq:rodri 2} and integrating by parts,
we find that
\begin{align}
\int_{\Gamma_0\cup\Gamma_2}x^{n+m}P_{n,n+m}(x)e^{-x^3}dx
&=\frac{(-1)^n}{3^n}\int_{\Gamma_0\cup\Gamma_2}x^{n+m}\frac{d^n}{dx^n}\left
(e^{-x^3}P_{0,m}(x) \right)dx \nonumber
\\
&=\frac{(n+m)!}{3^n m!}\int_{\Gamma_0\cup\Gamma_2}x^{m} P_{0,m}(x)
e^{-x^3}dx \nonumber
\\
&=\frac{(n+m)!}{3^n m!}\int_{\Gamma_0\cup\Gamma_2}P_{0,m}^2(x)
e^{-x^3}dx,
\end{align}
and
\begin{align}
\int_{\Gamma_0\cup\Gamma_2}x^{n-1}P_{n,n-1}(x)e^{-x^3}dx
&=\frac{(-1)^{n-1}}{3^{n-1}}\int_{\Gamma_0\cup\Gamma_2}x^{n-1}\frac{d^{n-1}}{dx^{n-1}}\left
(e^{-x^3}P_{1,0}(x) \right)dx \nonumber
\\
&=\frac{(n-1)!}{3^{n-1}}\int_{\Gamma_0\cup\Gamma_2} P_{1,0}(x)
e^{-x^3}dx.
\end{align}
Hence, we can simplify \eqref{eq:int of b(n,n+m)} as
\begin{equation}\label{eq:b(n,n+m) simp}
b_{n,n+m}=\begin{cases}
               \ds  \frac{n}{3}\frac{\int_{\Gamma_0\cup\Gamma_2}
              e^{-x^3}dx}{ \int_{\Gamma_0\cup\Gamma_2} P_{1,0}(x)
              e^{-x^3}dx}, & \hbox{$m=0$,} \\[20pt]
              \ds \frac{n+m}{m}\frac{ \int_{\Gamma_0\cup\Gamma_2}P_{0,m}^2(x)
e^{-x^3}dx}{ \int_{\Gamma_0\cup\Gamma_2}P_{0,m-1}^2(x) e^{-x^3}dx},
& \hbox{$m>0$.}
            \end{cases}
\end{equation}
Note that
\begin{equation}\label{int:P0m}
\int_{\Gamma_0\cup\Gamma_2}P_{0,m}^2(x)
e^{-x^3}dx=(\gamma_1^{(2)}\gamma_2^{(2)}\cdots
\gamma_m^{(2)})^2\int_{\Gamma_0\cup\Gamma_2} e^{-x^3}dx,\quad m>0,
\end{equation}
and straightforward calculations give us
\begin{equation}
\int_{\Gamma_0\cup\Gamma_2} e^{-x^3}dx
=\frac{\Gamma(1/3)}{3}(1-\omega^2),
\end{equation}
\begin{align}\label{int:P10}
\int_{\Gamma_0\cup\Gamma_2} P_{1,0}(x) e^{-x^3}dx
&=\int_{\Gamma_0\cup\Gamma_2} (x-\beta_0^{(1)}) e^{-x^3}dx
\nonumber \\
&=\frac{\Gamma(2/3)}{3}(1-\omega)-\frac{\Gamma(1/3)\beta_0^{(1)}}{3}(1-\omega^2)
\nonumber \\
&=\frac{\Gamma(2/3)}{3}(1-\omega)(1-(1+\omega)e^{\pi i/3}).
\end{align}
See \eqref{eq:beta01} for the value of $\beta_0^{(1)}$. Inserting
\eqref{int:P0m}--\eqref{int:P10} into \eqref{eq:b(n,n+m) simp}, we
arrive at
\begin{equation}\label{eq:b(n,n+m)2}
b_{n,n+m}=\begin{cases}
              \frac{n}{3\sqrt{3}}\frac{\Gamma(1/3)}{\Gamma(2/3)}i, & \hbox{$m=0$,} \\[10pt]
             \frac{n+m}{m}(\gamma_m^{(2)})^2, & \hbox{$m>0$,}
            \end{cases}
\end{equation}
which is \eqref{eq:b(n,n+m)} by \eqref{eq:betan2}.

We can also represent $a_{n,n+m}$ as a ratio of two integrals.
Indeed, by performing similar strategies above, it is easily seen
that
\begin{equation}\label{eq:int of a(n,n+m)}
a_{n,n+m}=\frac{\ds\int_{\Gamma_0\cup\Gamma_1}x^{n}P_{n,n+m}(x)e^{-x^3}dx}
{\ds\int_{\Gamma_0\cup\Gamma_1}x^{n-1}P_{n-1,n+m}(x)e^{-x^3}dx}
=\frac{n}{3}\frac{\ds\int_{\Gamma_0\cup\Gamma_1}P_{0,m}(x)e^{-x^3}dx}
{\ds\int_{\Gamma_0\cup\Gamma_1}P_{0,m+1}(x)e^{-x^3}dx}.
\end{equation}
Unfortunately, this representation is not suitable for direct
calculation except for $m=0$, which gives
\begin{align}\label{eq:int of a(n,n)}
a_{n,n}&= \frac{n}{3}\frac{\ds
\int_{\Gamma_0\cup\Gamma_1}P_{0,0}(x)e^{-x^3}dx}
{\ds\int_{\Gamma_0\cup\Gamma_1}P_{0,1}(x)e^{-x^3}dx} \nonumber
\\
&=\frac{n}{3}\frac{\ds\int_{\Gamma_0\cup\Gamma_1}e^{-x^3}dx}
{\ds\int_{\Gamma_0\cup\Gamma_1}(x-\beta_0^{(2)})e^{-x^3}dx} \nonumber \\
&=\overline{b_{n,n}}=-\frac{n}{3\sqrt{3}}\frac{\Gamma(1/3)}{\Gamma(2/3)}i.
\end{align}
For $m>0$ the integrals of $P_{0,m}$ over $\Gamma_0\cup\Gamma_{1}$
are involving polynomials orthogonal on the contour
$\Gamma_0\cup\Gamma_{2}$, hence it is then difficult to deal with
them. So we proceed in another way and we calculate the sum
$a_{n,n+m}+b_{n,n+m}$. Recall the notation $\delta_{k,l}$ and
$\varepsilon_{k,l}$ in \eqref{eq:deltanm}. By comparing the
coefficient of $x^{2n+m-1}$ on both sides of \eqref{eq:n+1}, it
follows from \eqref{eq:rel delta} that
\begin{equation}\label{eq:sum a and b}
a_{n,n+m}+b_{n,n+m}=\varepsilon_{n,n+m}-\varepsilon_{n+1,n+m}-c_{n,n+m}\delta_{n,n+m}
=\varepsilon_{0,m}-\varepsilon_{0,m-1}-c_{n,n+m}\delta_{0,m}
\end{equation}
for $m>0$. From \eqref{eq:recu 2}, we have
\begin{equation}
\varepsilon_{0,m}=\varepsilon_{0,m+1}+\beta_{m}^{(2)}\delta_{0,m}+(\gamma_m^{(2)})^2.
\end{equation}
This, together with \eqref{eq:betan2}, \eqref{eq:c(n,n+m)} and \eqref{eq:sum a and b},
implies
\begin{align}\label{eq:sum a and b 2}
a_{n,n+m}+b_{n,n+m}
&=\varepsilon_{0,m}-\varepsilon_{0,m-1}-c_{n,n+m}\delta_{0,m} \nonumber \\
&=-\beta_{m-1}^{(2)}\delta_{0,m-1}-(\gamma_{m-1}^{(2)})^2+\beta_{m-1}^{(2)}\delta_{0,m}
\nonumber \\
&=\beta_{m-1}^{(2)}(\delta_{0,m}-\delta_{0,m-1})-(\gamma_{m-1}^{(2)})^2
 \nonumber \\
&=-(\beta_{m-1}^{(2)})^2-(\gamma_{m-1}^{(2)})^2
\nonumber \\
&=(\gamma_{m}^{(2)})^2,\qquad m>0,
\end{align}
where we have made use of \eqref{eq:diff delta2} in the fourth
equality and the string equation \eqref{eq:string 1} in the last
step. A combination of \eqref{eq:b(n,n+m)2}, \eqref{eq:sum a and b
2} and \eqref{eq:int of a(n,n)} finally gives
\begin{equation}
a_{n,n+m}=\begin{cases}
             -\frac{n}{3\sqrt{3}}\frac{\Gamma(1/3)}{\Gamma(2/3)}i, & \hbox{$m=0$,} \\[10pt]
              -\frac{n}{m}(\gamma_m^{(2)})^2, & \hbox{$m>0$,}
            \end{cases}
\end{equation}
which is \eqref{eq:a(n,n+m)}, on account of \eqref{eq:betan2}.


\section{Asymptotics of $a_n$ and $b_n$} \label{sec:asy of an and bn}

From Theorem \ref{thm:nearest recurrence}, it is clear that the
coefficients in the nearest-neighbor recurrence relations are
determined by $a_n$ and $b_n$ generated from \eqref{eq:an bn
recur1}--\eqref{eq:initial a b}. It is then interesting to study
their large $n$ behavior. In Figure \ref{fig:1} we have plotted the
values of $a_n/n^{2/3}$ and $b_n/n^{1/3}$ for $n$ from $0$ to $70$,
from which we see that $a_{n+1}$ and $b_n$ are all strictly positive
for $n\in\mathbb{N}$. We actually have the following conjecture
concerning this observation.

\begin{figure}[ht]
\begin{center}
 \resizebox*{7cm}{!}{\includegraphics{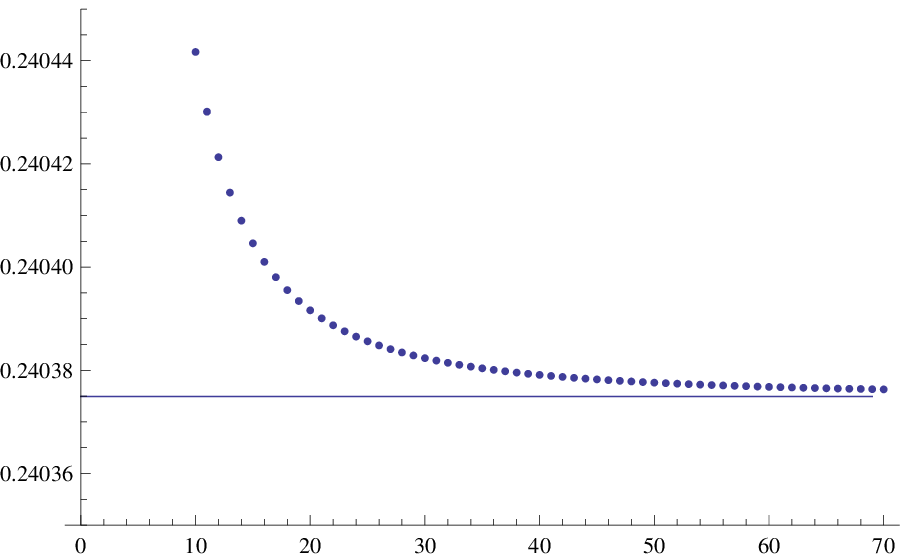}}
 \hspace{2mm} \resizebox*{7cm}{!}{\includegraphics{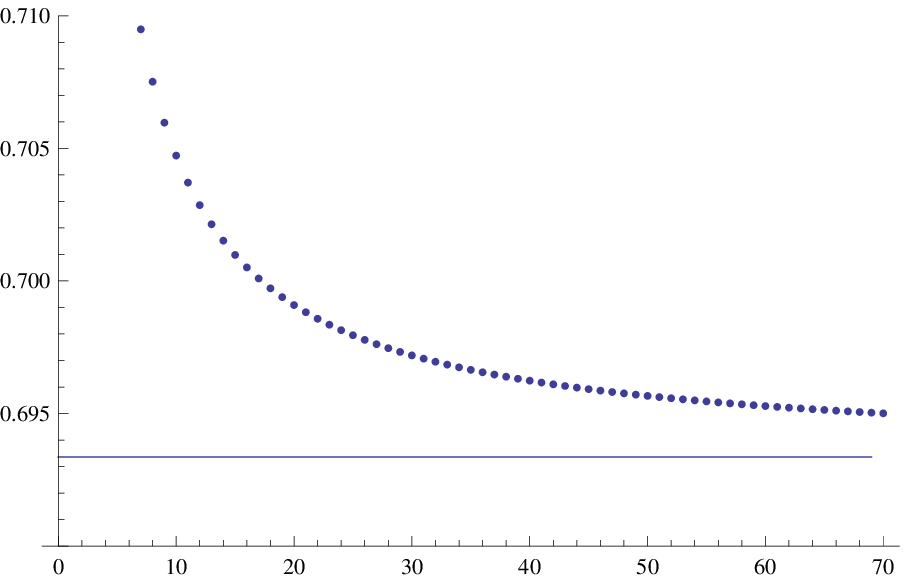}}
 \caption{The values of $a_n/n^{2/3}$ (left) and $b_n/n^{1/3}$
(right) for $n$ from $0$ to $70$.} \label{fig:1}
\end{center}
\end{figure}

\begin{conjecture}
There is a unique positive solution of the recurrence relations
\eqref{eq:an bn recur1}--\eqref{eq:an bn recur2} with $a_0=0$ and
$a_{n+1}>0, b_n>0$ for $n\in\mathbb{N}$. This solution corresponds
to the initial condition $b_0=\Gamma(2/3)/\Gamma(1/3)$.
\end{conjecture}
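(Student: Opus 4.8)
The plan is to read \eqref{eq:an bn recur1}--\eqref{eq:an bn recur2} with $a_0=0$ as a one-parameter shooting problem in $t:=b_0>0$: the relations $a_1=t^2$, $b_1=\frac{1}{3t^2}-t$, $a_{n+1}=b_n^2-a_n$, $b_{n+1}=\frac{n+1}{3a_{n+1}}-b_n$ determine sequences $a_n(t),b_n(t)$, and a \emph{positive solution} is one with $a_{n+1}(t)>0$ and $b_n(t)>0$ for all $n$. I would organize the proof as a priori bounds, uniqueness, and existence/identification. For the bounds, note that for any positive solution \eqref{eq:an bn recur2} forces $a_n=\frac{n}{3(b_{n-1}+b_n)}$, and substituting into \eqref{eq:an bn recur1} gives $b_n^2=\frac{n}{3(b_{n-1}+b_n)}+\frac{n+1}{3(b_n+b_{n+1})}<\frac{2n+1}{3b_n}$, so $b_n<n^{1/3}$ and $a_n<b_n^2<n^{2/3}$ for $n\ge 1$. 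Positivity of $a_{n+1}$ also yields $b_n^2>\frac{n}{3(b_{n-1}+b_n)}$, which together with $b_{n-1}+b_n<2n^{1/3}$ gives $b_n>6^{-1/2}n^{1/3}$. Hence every positive solution satisfies $6^{-1/2}n^{1/3}<b_n<n^{1/3}$ and $a_n<n^{2/3}$ for $n\ge 1$.

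For uniqueness, suppose $(a_n,b_n)$ and $(a_n',b_n')$ are positive solutions with $b_0<b_0'$, and set $u_n=|a_n'-a_n|$, $v_n=|b_n'-b_n|$. The differences obey $a_{n+1}'-a_{n+1}=(b_n+b_n')(b_n'-b_n)-(a_n'-a_n)$ and $b_{n+1}'-b_{n+1}=-\frac{n+1}{3a_{n+1}a_{n+1}'}(a_{n+1}'-a_{n+1})-(b_n'-b_n)$. A short induction shows the signs alternate, $\operatorname{sign}(b_n'-b_n)=(-1)^n$ and $\operatorname{sign}(a_n'-a_n)=(-1)^{n-1}$, so that in each line the two contributions share a sign and their magnitudes add: $u_{n+1}=u_n+(b_n+b_n')v_n$ and $v_{n+1}=v_n+\frac{n+1}{3a_{n+1}a_{n+1}'}u_{n+1}$. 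Thus $u_n,v_n$ are nondecreasing and bounded below by $u_1=(b_0')^2-b_0^2>0$ and $v_1>0$. Using $b_n+b_n'>2\cdot 6^{-1/2}n^{1/3}$, summation gives $u_N\ge v_1\sum_{n<N}(b_n+b_n')\gtrsim N^{4/3}$, contradicting $u_N\le a_N+a_N'<2N^{2/3}$ for large $N$. Hence at most one positive solution exists.

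For existence I would run the shooting on $t\in(0,3^{-1/3})$, the range in which $b_1>0$. At the two ends positivity fails in opposite ways: as $t\to 0^+$ one computes $b_2(t)\to-\infty$, while as $t\to(3^{-1/3})^-$ one has $b_1\to 0$ and $a_2(t)\to-3^{-2/3}<0$. Classifying each $t$ by the type of its first sign violation partitions the interval into the set $B_-$ where some $b_n$ first turns negative, the set $B_+$ where some $a_{n+1}$ first becomes non-positive, and the remaining set $\mathcal P$ of everywhere-positive parameters. Both $B_\pm$ are open and nonempty, so by connectedness of $(0,3^{-1/3})$ the set $\mathcal P$ is nonempty; with the uniqueness already established this gives exactly one positive solution. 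To identify it with $b_0=\Gamma(2/3)/\Gamma(1/3)$ I would invoke Proposition \ref{prop:recu}: the orthogonal-polynomial coefficients provide a globally defined real solution with precisely this initial value, and the limits $b_n\sim 3^{-1/3}n^{1/3}$, $a_n\sim\frac12 3^{-2/3}n^{2/3}$ of Section \ref{sec:asy of an and bn} place it on the same asymptotic trajectory as the positive solution; the growth estimate from the uniqueness step then forces the two to coincide, pinning $b_0$ to $\Gamma(2/3)/\Gamma(1/3)$ as in \eqref{eq:initial a b}.

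The main obstacle is the existence step. Proving that $B_-$ and $B_+$ are genuinely open and that every parameter lands in $B_-$, $B_+$ or $\mathcal P$ is delicate, because a borderline first violation of the form $a_{n+1}(t)=0$ (where $b_{n+1}$ ceases to be defined) sits on the common boundary and must be controlled by a monotone-dependence analysis analogous to the treatment of positive solutions of Freud-type string equations (cf. Lew--Quarles and Nevai). A cleaner but more technical route is to linearize \eqref{eq:an bn recur1}--\eqref{eq:an bn recur2} about the profile $\bigl(\tfrac12 3^{-2/3}n^{2/3},\,3^{-1/3}n^{1/3}\bigr)$, establish the resulting saddle structure with one growing and one decaying mode, and realize the positive solution as the one-dimensional stable manifold selected by $a_0=0$; the decaying mode would then deliver uniqueness and the identification with the orthogonal-polynomial solution simultaneously.
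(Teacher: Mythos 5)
First, a point of calibration: the statement you are proving is labelled a \emph{Conjecture} in the paper and is left open there; the authors offer only numerical evidence and prove the weaker Proposition \ref{prop:anbn}, which \emph{assumes} positivity. So there is no proof in the paper to compare against, and a complete argument would be new. Within your proposal, the a priori bounds and the uniqueness step are sound and essentially complete: the alternating-sign induction on $a_n'-a_n$ and $b_n'-b_n$ checks out, the resulting monotone growth $u_N\gtrsim N^{4/3}$ against the bound $u_N\le a_N+a_N'<2N^{2/3}$ does yield at most one positive solution, and this is a faithful adaptation of the Lew--Quarles/Nevai treatment of Freud's string equation, in the same spirit as the Freud-type argument the paper uses to prove Proposition \ref{prop:anbn}.

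The two remaining steps contain genuine gaps. For existence, your set $B_+$ (first violation of the form $a_{n+1}\le 0$) is \emph{not} open: if $a_{n+1}(t_0)=0$ exactly, then for nearby $t$ with $a_{n+1}(t)>0$ small one gets $b_{n+1}(t)=\frac{n+1}{3a_{n+1}(t)}-b_n(t)\to+\infty$, hence $a_{n+2}(t)\to+\infty$ and $b_{n+2}(t)=\frac{n+2}{3a_{n+2}(t)}-b_{n+1}(t)\to-\infty$, so such $t$ land in $B_-$; the connectedness argument therefore does not close as stated (you acknowledge this), and one needs either a monotonicity-in-$t$ statement or a nested-compact-sets formulation to produce a point of $\mathcal P$. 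More seriously, the identification with $b_0=\Gamma(2/3)/\Gamma(1/3)$ is circular as written: Proposition \ref{prop:recu} only provides a \emph{real} solution with that initial value, and Proposition \ref{prop:anbn} establishes the asymptotics $b_n\sim 3^{-1/3}n^{1/3}$, $a_n\sim \tfrac12 3^{-2/3}n^{2/3}$ only for solutions already known to be positive, so you cannot place the orthogonal-polynomial solution ``on the same asymptotic trajectory'' without first knowing it is positive --- which is precisely the content of the conjecture. The difference/growth argument likewise requires both solutions to be positive (or at least eventually positive with controlled signs) for the alternating-sign induction to run. Closing this requires an independent input, e.g.\ a Riemann--Hilbert asymptotic analysis of $\beta_n^{(1)}$ and $(\gamma_n^{(1)})^2$, or a direct proof that the ratios of the (complex) norms on $\Gamma_0\cup\Gamma_1$ have argument $-\pi/3$ as required by \eqref{eq:betatob gammatoa}; neither is supplied.
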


The numerical study further suggests that the limits of
$a_n/n^{2/3}$ and $b_n/n^{1/3}$ exist as $n\to\infty$, which we can
identify in the proposition below.
\begin{proposition}  \label{prop:anbn}
Every positive solution of \eqref{eq:an bn recur1}--\eqref{eq:an bn
recur2} has the property that
$$
\lim_{n\to\infty}a_n/n^{2/3}=\frac{1}{2\cdot 3^{2/3}},\qquad
\lim_{n\to\infty}b_n/n^{1/3}=\frac{1}{ 3^{1/3}}.
$$
\end{proposition}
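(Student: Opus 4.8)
The plan is to pass to the rescaled sequences $\alpha_n:=a_n/n^{2/3}$ and $\beta_n:=b_n/n^{1/3}$ and to prove that they converge to the values $A=\frac{1}{2\cdot 3^{2/3}}$ and $B=3^{-1/3}$ dictated by the ansatz $a_n\sim An^{2/3}$, $b_n\sim Bn^{1/3}$: inserting this ansatz into \eqref{eq:an bn recur1}--\eqref{eq:an bn recur2} and keeping only the leading order gives $2A=B^2$ and $6AB=1$, which has the unique positive solution $(A,B)$ above. In the rescaled variables the two recurrences become
\begin{align}
\alpha_n+(1+1/n)^{2/3}\alpha_{n+1}&=\beta_n^2, \\
3\alpha_n\bigl(\beta_n+(1-1/n)^{1/3}\beta_{n-1}\bigr)&=1,
\end{align}
so that the coefficients $(1\pm 1/n)^{\,\cdot}$ tend to $1$ as $n\to\infty$.

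First I would establish two-sided a priori bounds using only positivity and \eqref{eq:an bn recur1}--\eqref{eq:an bn recur2}. Writing \eqref{eq:an bn recur2} as $a_n=\frac{n}{3(b_n+b_{n-1})}$ and using $b_{n-1},b_{n+1}>0$ gives $a_n<\frac{n}{3b_n}$ and $a_{n+1}<\frac{n+1}{3b_n}$; substituting into \eqref{eq:an bn recur1} yields $b_n^2<\frac{2n+1}{3b_n}$, i.e.\ $b_n\le\bigl(\frac{2n+1}{3}\bigr)^{1/3}$. Feeding this upper bound back into $a_n=\frac{n}{3(b_n+b_{n-1})}$ and into $b_n^2>a_n$ produces the matching lower bounds $a_n\ge n^{2/3}/6$ and $b_n> n^{1/3}/\sqrt6$, while $a_n<b_n^2$ gives $a_n\le\bigl(\frac{2n+1}{3}\bigr)^{2/3}$. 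Consequently the four numbers $\underline\alpha=\liminf\alpha_n$, $\bar\alpha=\limsup\alpha_n$, $\underline\beta=\liminf\beta_n$, $\bar\beta=\limsup\beta_n$ are all finite and strictly positive, and $(\alpha_n,\beta_n)$ stays in a fixed compact positive box.

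The heart of the argument is then an extremal analysis pinning these four numbers down. Choosing a subsequence along which $\alpha_n\to\underline\alpha$ and (after refining) $\beta_n,\beta_{n-1}$ converge, the rescaled \eqref{eq:an bn recur2} shows that $\frac{1}{3\underline\alpha}$ is a subsequential limit of $\beta_n+\beta_{n-1}$, whence $2\underline\beta\le\frac{1}{3\underline\alpha}\le 2\bar\beta$; the choice $\alpha_n\to\bar\alpha$ likewise gives $2\underline\beta\le\frac{1}{3\bar\alpha}\le 2\bar\beta$. In the same way the rescaled \eqref{eq:an bn recur1} yields $2\underline\alpha\le\underline\beta^2$, $\bar\beta^2\le 2\bar\alpha$, $2\underline\alpha\le\bar\beta^2$ and $\underline\beta^2\le 2\bar\alpha$. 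Combining $\underline\alpha\ge\frac{1}{6\bar\beta}$ with $\underline\alpha\le\frac{\underline\beta^2}{2}$ gives $\bar\beta\,\underline\beta^2\ge\frac13$, while combining $\bar\alpha\le\frac{1}{6\underline\beta}$ with $\bar\alpha\ge\frac{\bar\beta^2}{2}$ gives $\bar\beta^2\,\underline\beta\le\frac13$; chaining the two through $\frac13$ forces $\bar\beta^2\underline\beta\le\bar\beta\underline\beta^2$, hence $\bar\beta\le\underline\beta$ and therefore $\bar\beta=\underline\beta=:\beta_*$ with $\beta_*^3=\frac13$. Thus $\beta_n\to 3^{-1/3}=B$, and then the chain $\frac{1}{6\bar\beta}\le\underline\alpha\le\bar\alpha\le\frac{1}{6\underline\beta}$ collapses to $\alpha_n\to\frac{1}{6B}=\frac{1}{2\cdot 3^{2/3}}=A$, which is the claim.

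The step I expect to be most delicate — and the one worth emphasizing — is the convergence step, not the identification of the constants. A naive linearization of the frozen recurrence at $(A,B)$ produces a \emph{saddle}: the Jacobian has trace $-10$ and determinant $1$, with eigenvalues $-5\pm 2\sqrt6$, one inside and one outside the unit circle. Hence convergence cannot be obtained from a contraction or stability estimate, since generic trajectories of the frozen system escape along the unstable direction. What rescues the statement is positivity: the a priori bounds confine $(\alpha_n,\beta_n)$ to a compact positive box, and the extremal argument exploits the exact nonlinear relations directly on the subsequential extrema, bypassing linearization entirely. The only real care needed is in the index bookkeeping (the shifts $n\mapsto n\pm1$ and the vanishing corrections $(1\pm 1/n)^{\,\cdot}$), which is handled by passing to sub-subsequences along which all of $\alpha_n,\alpha_{n+1},\beta_n,\beta_{n-1}$ converge simultaneously.
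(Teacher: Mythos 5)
Your proposal is correct and is essentially the paper's own argument: Freud-style a priori bounds from positivity followed by a subsequential liminf/limsup analysis of the exact nonlinear relations (the paper explicitly credits Freud for this method). The only difference is bookkeeping — the paper first eliminates $b_n$ via $b_n=\sqrt{a_n+a_{n+1}}$ and works with just the two extremal values $a=\liminf a_n/n^{2/3}$, $A=\limsup a_n/n^{2/3}$, arriving at $6A\sqrt{a+A}\le 1\le 6a\sqrt{a+A}$ and hence $A\le a$, whereas you keep both rescaled sequences and chain four extremal quantities to the same conclusion; your remark that the frozen map is a saddle, so no contraction argument could work, is a nice observation the paper does not make.
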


\begin{proof}
This can be proved by an argument which was already used by Freud in
\cite[\S 3]{FR}. First we show that $(a_n/n^{2/3})_{n \in
\mathbb{N}}$ is a bounded sequence. From \eqref{eq:an bn recur1} and
the positivity of $a_{n+1}$ we find that $a_n \leq b_n^2$. From
\eqref{eq:an bn recur2} and the positivity of $b_{n-1}$ we find
$3a_nb_n \leq n$ and thus also $9a_n^2b_n^2 \leq n^2$. Together this
gives $9a_n^3 \leq n^2$, so that $0 \leq a_n/n^{2/3} \leq
1/9^{1/3}$.

Let $a= \liminf_{n \to \infty} a_n/n^{2/3}$ and $A = \limsup_{n \to
\infty} a_n/n^{2/3}$, then $0 \leq a \leq A < \infty$. From
\eqref{eq:an bn recur1} and the positivity of $b_n$ we find $b_n =
\sqrt{a_n+a_{n+1}}$. Insert this in \eqref{eq:an bn recur2} to find
\begin{equation}  \label{eq:an recur}
  3a_n \left( \sqrt{a_n+a_{n+1}} + \sqrt{a_n+a_{n-1}} \right) = n.
\end{equation}
Let $n \to \infty$ in \eqref{eq:an recur} through a subsequence for
which $a_n/n^{2/3} \to a$, then one finds $1 \leq 6a \sqrt{a+A}$. If
$n \to \infty$ through a subsequence for which $a_n/n^{2/3} \to A$,
then $6A \sqrt{a+A} \leq 1$. Together this gives $6A \sqrt{a+A} \leq
6a \sqrt{a+A}$. If $a+A=0$ then one automatically has $a=A=0$ so
that the limit exists (further on we will see that $a\neq 0$ so that
this case does not happen).  If $a+A >0$ then one finds $A \leq a$,
and together with $a \leq A$ we see that also in this case $a=A$ and
the limit exists.
 From \eqref{eq:an bn recur1} we then find that $\lim_{n \to \infty} b_n/n^{1/3} = \sqrt{2a}$. If we use that information in  \eqref{eq:an bn recur2}, then
 $6a\sqrt{2a} = 1$, so that $a = (1/6)^{2/3} = 1/(2\cdot 3^{2/3})$. The limit for $b_n/n^{1/3}$ follows immediately from this.
\end{proof}


\begin{figure}[ht]
\begin{center}
\includegraphics{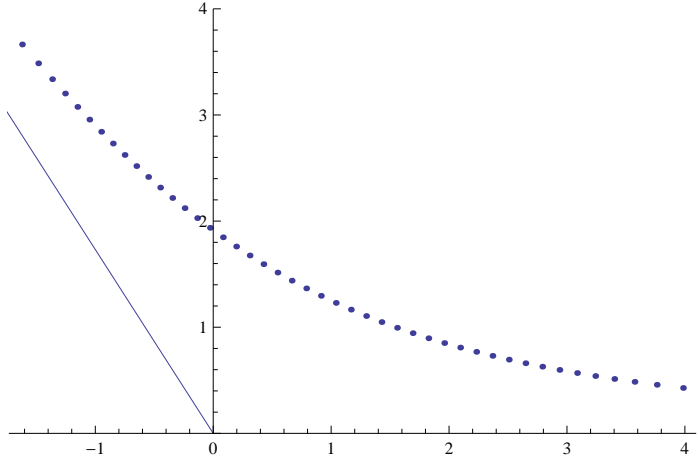}
\caption{Zeros of $P_{45,0}$ (after scaling)} \label{fig:2}
\end{center}
\end{figure}

\section{Zeros} \label{sec:zeros}

The formulas in Theorems \ref{thm:Rodrigues} and \ref{thm:nearest
recurrence} can be used to generate the multiple orthogonal
polynomials $P_{k,l}$ defined in \eqref{def:ortho1} and
\eqref{def:ortho2}. We investigate the distribution of their zeros
numerically. If one of $k$ and $l$ is zero, the polynomials are
orthogonal for the exponential cubic weight on the curve
($\Gamma_0\cup\Gamma_1$ or $\Gamma_0\cup\Gamma_2$) in the complex
plane. The zeros of $P_{45,0}(45^{1/3}x)$ are plotted in Figure \ref{fig:2}.
It is known that, in this case, the zeros of the polynomials, after proper scaling,
will accumulate on an analytic contour in the complex plane that
possesses the so-called $S-$property; cf. \cite{GoR,Stahl}. The zero
distribution was investigated earlier by Dea\~{n}o, Huybrechs and
Kuijlaars \cite{DHK}, who in fact used the weight $e^{ix^3}$.
However, a simple rotation $x = ye^{\pi i/6}$ is enough to transform
their results to the exponential cubic $e^{-y^3}$ which we are
using.

\begin{figure}[t]
\begin{center}
\includegraphics{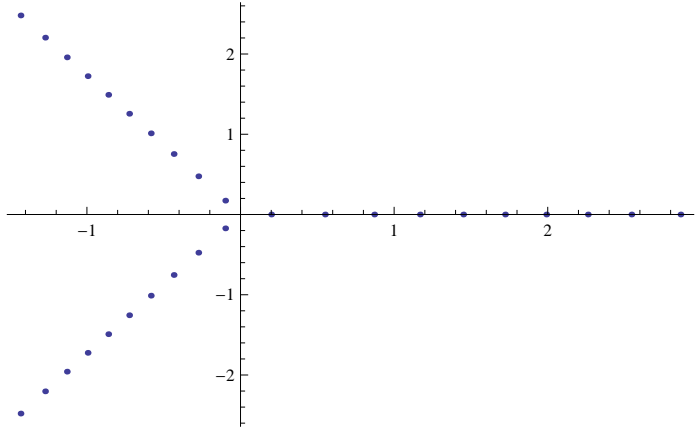}
\caption{Zeros of $P_{15,15}$ (after scaling)} \label{fig:3}
\end{center}
\end{figure}

Suppose that $k=l=n$. It follows from Theorem \ref{thm:Rodrigues} that
\begin{align}
P_{n,n}(x)=\frac{(-1)^n}{3^n}e^{x^3}\frac{d^n}{dx^n}\left (e^{-x^3}
\right).
\end{align}
We can describe the asymptotic distribution of the zeros of the diagonal polynomials $P_{n,n}$ in more detail.
The main reason is that the zeros of $P_{n,n}$ are all located on the three rays $\Gamma_0 \cup \Gamma_1 \cup \Gamma_2$,
which simplifies matters considerably (see Figure \ref{fig:3}). We have the following result for the diagonal polynomials.
Observe that this result is the solution of Problem 59 \cite[Part V, Chapter 1]{PS} for the polynomial $R_n$ and $q=3$.
\begin{proposition}   \label{prop:sym}
The polynomials $P_{n,n}(x)$ satisfy the symmetry property $P_{n,n}(\omega x) = \omega^{2n}P_{n,n}(x)$, where $\omega = e^{2\pi i/3}$ is the primitive third root of unity. In particular
\begin{equation}  \label{eq:Pnn}
               P_{n,n}(x) = \begin{cases}
                    \sum_{j=0}^{2n/3} a_j x^{3j}, & n \equiv 0 \bmod 3, \\
                     x^2 \sum_{j=0}^{2(n-1)/3} b_j x^{3j}, & n \equiv 1 \bmod 3, \\
                     x \sum_{j=0}^{2(n-2)/3+1} c_j x^{3j}, & n \equiv 2 \bmod 3,
                   \end{cases}
\end{equation}
where $(a_j)_j,(b_j)_j,(c_j)_j$ are real sequences depending on $n$. Furthermore the number of strictly positive real zeros of $P_{n,n}$ is
\[   \begin{cases}
       \frac{2n}{3}, & \textrm{if } n \equiv 0 \bmod 3, \\
       \frac{2(n-1)}{3}, & \textrm{if } n \equiv 1 \bmod 3, \\
       \frac{2(n-2)}{3}+1, & \textrm{if } n \equiv 2 \bmod 3,
     \end{cases}  \]
and $P_{n,n}(x)$ has a zero of multiplicity one at $x=0$ when $n \equiv 2 \bmod3$ and a zero of multiplicity two at $x=0$
when $n \equiv 1 \bmod 3$.
\end{proposition}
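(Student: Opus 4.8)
My plan is to work throughout from the Rodrigues expression $P_{n,n}(x)=\frac{(-1)^n}{3^n}e^{x^3}g_n(x)$, where $g_n(x):=\frac{d^n}{dx^n}e^{-x^3}$, and to treat the assertions in the order: symmetry, the gap structure \eqref{eq:Pnn}, the order of vanishing at $0$, and finally the count of positive zeros. For the symmetry, since $\omega^3=1$ we have $(\omega x)^3=x^3$, so $F(x):=e^{-x^3}$ satisfies $F(\omega x)=F(x)$; differentiating $n$ times and using the chain rule gives $\omega^n F^{(n)}(\omega x)=F^{(n)}(x)$, i.e. $F^{(n)}(\omega x)=\omega^{2n}F^{(n)}(x)$, and multiplying by $\frac{(-1)^n}{3^n}e^{(\omega x)^3}=\frac{(-1)^n}{3^n}e^{x^3}$ yields $P_{n,n}(\omega x)=\omega^{2n}P_{n,n}(x)$. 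To deduce \eqref{eq:Pnn} I combine this with two routine facts: $P_{n,n}$ is monic of degree $2n$, and it has real coefficients (writing $q_n:=e^{x^3}g_n$ one has $q_0=1$ and the recurrence $q_{n+1}=q_n'-3x^2q_n$, which preserves $\mathbb{R}[x]$). The symmetry forces every nonzero coefficient onto an exponent $j\equiv 2n\pmod 3$, and with $\deg P_{n,n}=2n$ together with reality this produces exactly the three cases in \eqref{eq:Pnn}.

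For the behaviour at the origin, the multiplicity of $P_{n,n}$ at $0$ equals the multiplicity $\ell_n$ of $q_n$ at $0$ (as $e^{x^3}\neq 0$), and by the previous step $\ell_n$ is the least nonnegative integer $\equiv 2n\pmod 3$, namely $\ell_n=0,2,1$ for $n\equiv 0,1,2\pmod 3$. I would show the corresponding coefficient is nonzero by differentiating $q_n=e^{x^3}g_n$: since $g_n'=g_{n+1}$ one checks $q_n(0)=g_n(0)$, $q_n'(0)=g_{n+1}(0)$ and $q_n''(0)=g_{n+2}(0)$, all the extra terms carrying a factor $x$ and dropping out at $0$, so $q_n^{(\ell_n)}(0)=g_{n+\ell_n}(0)$. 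Because $n+\ell_n\equiv 0\pmod 3$, the value $g_{n+\ell_n}(0)=(n+\ell_n)!\,(-1)^{(n+\ell_n)/3}/\big((n+\ell_n)/3\big)!$ is nonzero, which pins the multiplicity to exactly $\ell_n$ and gives the stated orders $2$ (for $n\equiv1$) and $1$ (for $n\equiv2$).

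The heart of the matter is the zero count. On the real axis $e^{x^3}>0$, so the strictly positive real zeros of $P_{n,n}$, counted with multiplicity, are exactly the positive zeros of $g_n$, which via $x\mapsto x^3$ correspond bijectively and with matching multiplicity to the positive roots of the reduced polynomial of degree $d_n\in\{2n/3,\,2(n-1)/3,\,2(n-2)/3+1\}$ appearing in \eqref{eq:Pnn}; this $d_n$ is the claimed count and is an a priori upper bound for the number of positive zeros. I would then prove by induction on $n$ that $g_n$ has exactly $d_n$ positive zeros and that they are simple. Assuming $g_n$ has $d_n$ simple positive zeros $x_1<\dots<x_{d_n}$ and a zero at $0$ of order $\ell_n$, I apply Rolle's theorem to $g_n'=g_{n+1}$ on each gap $(x_i,x_{i+1})$, on $(0,x_1)$ when $\ell_n>0$ (both endpoints are then zeros), and on $(x_{d_n},\infty)$, where I use that $g_n$ and all its derivatives tend to $0$ as $x\to+\infty$, so $|g_n|$ attains an interior maximum. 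Counting the bracketing points $0$ (present only when $\ell_n>0$), $x_1,\dots,x_{d_n}$ and $+\infty$ produces at least $d_{n+1}$ zeros of $g_{n+1}$ in disjoint open intervals; since $d_{n+1}$ is also the degree upper bound, there is exactly one zero per interval, the zeros are distinct, hence simple, and the induction closes. A short complementary computation, $g_n(-y)=(-1)^n\frac{d^n}{dy^n}e^{y^3}$ with $e^{y^3}=\sum_k y^{3k}/k!$ having only positive coefficients, shows $g_n$ has no zero on $(-\infty,0)$, confirming that all zeros of $P_{n,n}$ lie on $\Gamma_0\cup\Gamma_1\cup\Gamma_2$.

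I expect the delicate point to be this last induction: the bookkeeping at the two boundaries — using the decay at $+\infty$ to force an extra zero beyond $x_{d_n}$, and the vanishing at $0$ to force (only when $\ell_n>0$) a zero before $x_1$ — must match the jump $d_{n+1}-d_n$ exactly, and it is precisely the coincidence of the Rolle lower bound with the degree upper bound that both fixes the count and upgrades ``at least one zero per gap'' to simplicity. The other three parts are essentially formal consequences of the Rodrigues formula and $\omega^3=1$.
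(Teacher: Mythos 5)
Your proof is correct, and while it shares the overall skeleton with the paper's argument (induction on $n$ plus a Rolle/interlacing count, capped by an exact upper bound), the implementation is genuinely different at each stage. The paper works with the polynomial recurrence $P_{n,n}(x)=x^2P_{n-1,n-1}(x)-\tfrac13 P_{n-1,n-1}'(x)$ and evaluates $P_{n,n}$ at the zeros $x_j$ of $P_{n-1,n-1}$ to get sign changes $(-1)^j$, producing interlacing zeros; it then rules out extra zeros by adding up $2n$ zeros via the three-fold $\omega$-symmetry. You instead apply Rolle directly to $g_n'=g_{n+1}$ where $g_n=(e^{-x^3})^{(n)}$ (closer to P\'olya's original argument for successive derivatives), using the decay of $g_n$ at $+\infty$ in place of the paper's sign analysis at the largest zero, and you cap the count by the degree $d_{n+1}$ of the reduced polynomial in $x^3$ rather than by the total degree $2n$ --- which has the side benefit of not needing the symmetry for the zero count at all. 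You also pin down the multiplicity at the origin by the clean computation $q_n^{(\ell_n)}(0)=g_{n+\ell_n}(0)\neq 0$, whereas the paper extracts it from the case analysis inside the induction, and you add the (true, and useful later in the paper) observation that $g_n$ has no negative real zeros. Two small points to tidy up: your Rolle bookkeeping as stated presumes $d_n\geq 1$ (so that the intervals $(0,x_1)$ and $(x_{d_n},\infty)$ make sense); the case $n=1$, where $d_1=0$ but $\ell_1=2$, should either be absorbed into explicit base cases $n=0,1,2$ (as the paper does) or handled by applying your ``Rolle at infinity'' argument on $(0,\infty)$ directly. And in the upper-bound step, note explicitly that a positive root $x_0$ of $P_{n,n}$ and the root $t_0=x_0^3$ of the reduced polynomial have equal multiplicities because $x^3-x_0^3$ has a simple factor $x-x_0$ for $x_0>0$; with that, ``at least $d_{n+1}$ distinct'' meets ``at most $d_{n+1}$ with multiplicity'' and simplicity follows exactly as you say.
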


\begin{proof}
We use induction on $n$. The symmetry property follows easily from the Rodrigues formula, so we only need to prove the result about the
positive real zeros. Observe that
\[   P_{0,0}(x)=1, \quad P_{1,1}(x) = x^2, \quad P_{2,2}(x) = x(x^3-2/3), \]
so that the result is true for $n=0,1,2$. Suppose that the result is true for $n-1$ and let $x_1 > x_2 > \cdots > x_k > 0$ be the
positive real zeros of $P_{n-1,n-1}$. Clearly the sign of $P_{n-1,n-1}'(x_j)$ is $(-1)^{j+1}$ for $1 \leq j \leq k$, hence from
\begin{equation} \label{eq:Pnnrec}
 P_{n,n}(x) = x^2 P_{n-1,n-1}(x) - \frac13 P_{n-1,n-1}'(x)
\end{equation}
we find that the sign of $P_{n,n}(x_j)$ is $(-1)^j$, hence $P_{n,n}$ changes sign $k$ times and Rolle's theorem guarantees that there are
at least $k$ zeros $y_1 > y_2 > \cdots > y_k$ with $x_j < y_j < x_{j-1}$, where $x_0 = +\infty$.
\begin{itemize}
  \item If $n \equiv 0 \bmod 3$ then $n-1 \equiv 2 \bmod 3$ and the induction hypothesis says that $k=2(n-3)/3+1$ is odd and $P_{n-1,n-1}(x)$ has a zero
  of multiplicity one at $x=0$. The sign of $P_{n-1,n-1}'(0)$ is $(-1)^{k}=-1$ so that the sign of $P_{n,n}(0)$ is $(-1)^{k+1}=1$, hence there is also a zero $y_{k+1}$ of $P_{n,n}$ between $0$ and $x_k$, giving a total of $k+1= 2n/3$ positive real zeros. The $\omega$-symmetry gives another $2n/3$ zeros on $\Gamma_1$ and $2n/3$ zeros on $\Gamma_2$, which is a total of $2n$ zeros. Hence there are no other zeros of $P_{n,n}$.
  \item If $n \equiv 1 \bmod 3$ then $n-1 \equiv 0 \bmod 3$ and the induction hypothesis gives $k=2(n-1)/3$ and $P_{n-1,n-1}(x)$ has no zero at $x=0$.
  Hence there will not be an additional zero between $0$ and $x_k$ so that there are $k=2(n-1)/3$ positive real zeros for $P_{n,n}$. There is
  double zero of $P_{n,n}(x)$ at $x=0$. The $\omega$-symmetry gives another $2(n-1)/3$ zeros on $\Gamma_1$ and $2(n-1)/3$ zeros on $\Gamma_2$, hence
  the total number of zeros is $2(n-2)+2=2n$ so that there are no other zeros.
  \item If $n \equiv 2 \bmod 3$ then $n-1 \equiv 1 \bmod 3$ and the induction hypothesis gives $k=2(n-2)/3$ is even and a double zero for $P_{n-1,n-1}(x)$ at $x=0$. Then \eqref{eq:Pnnrec} implies that $P_{n,n}(x)$ has a single zero at $0$. The polynomial $P_{n-1,n-1}(x)/x^2$ of degree $2n-4$ has $k$ positive zeros and the sign of this polynomial as $x \to 0$ is $(-1)^k=1$, so that $P_{n,n}(x)/x$ has sign $(-1)^{k+1}=-1$ as $x \to 0$. Hence $P_{n,n}(x)/x$ has a zero $y_{k+1}$ between $0$ and $x_k$, giving a total of $k+1=2(n-2)/3+1$ positive real zeros. The $\omega$-symmetry gives another $2(n-2)/3+1$ zeros on $\Gamma_1$ and another $2(n-2)/3+1$ zeros on $\Gamma_2$, hence together with the single zero at $x=0$ this gives a total of $2(n-2)+4= 2n$ zeros for $P_{n,n}$ so that there are no other zeros.
\end{itemize}
\end{proof}

Observe that the proof also shows that the zeros of $P_{n-1,n-1}$ and $P_{n,n}$ interlace in the sense that $x_1 < y_1 < \infty$, $x_j < y_j < x_{j-1}$
for $j=2,\ldots,k$, $x_1 < y_1 < \infty$ and $0 < y_{k+1} < x_k$ (the latter only when $n \equiv 0 \bmod 3$ and $n \equiv 2 \bmod 3$).

We can now prove the following results

\begin{theorem}  \label{thm:ratio}
Let $K$ be a compact set in $\mathbb{C} \setminus (\Gamma_0 \cup \Gamma_1 \cup \Gamma_2)$, then
\[  \lim_{n \to \infty} \frac{1}{n^{2/3}} \frac{P_{n,n}(n^{1/3}x)}{P_{n-1,n-1}(n^{1/3}x)} = \Phi(x), \]
holds uniformly for $x \in K$, where
\[  \Phi(x) = \frac{1}{e^{2\pi i/3} \left(\frac{-3+\sqrt{9-4x^3}}{2}\right)^{2/3} + e^{-2\pi i/3} \left( \frac{-3-\sqrt{9-4x^3}}2 \right)^{2/3} + 2x}. \]
Furthermore
\[   \lim_{n \to \infty} \frac{1}{n^{2/3}} \frac{P_{n,n}'(n^{1/3}x)}{P_{n,n}(n^{1/3}x)} = 3x^2 - 3 \Phi(x), \]
holds uniformly for $x \in K$.
\end{theorem}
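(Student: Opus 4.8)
The plan is to start from the explicit representation $P_{n,n}(x)=\frac{(-1)^n}{3^n}e^{x^3}\frac{d^n}{dx^n}e^{-x^3}$ recorded just before Proposition~\ref{prop:sym} and to turn the $n$-th derivative into a Cauchy contour integral. Writing $\frac{d^n}{dx^n}e^{-x^3}=\frac{n!}{2\pi i}\oint\frac{e^{-t^3}}{(t-x)^{n+1}}\,dt$ over a small positively oriented loop around $t=x$ and then rescaling $x\mapsto n^{1/3}x$, $t\mapsto n^{1/3}\tau$, both $P_{n,n}(n^{1/3}x)$ and $P_{n-1,n-1}(n^{1/3}x)$ acquire the common shape $(\textrm{prefactor})\times\oint (\tau-x)^{-1}e^{ng(\tau)}\,d\tau$ and $(\textrm{prefactor})\times\oint e^{ng(\tau)}\,d\tau$ respectively, with the \emph{single} phase
\[
  g(\tau)=x^3-\tau^3-\log(\tau-x).
\]
Bookkeeping of the factorial and power prefactors shows that the ratio of prefactors is exactly $-n^{2/3}/3$, so that after dividing by $n^{2/3}$ the quantity in the theorem equals $-\tfrac13$ times the ratio of the two integrals above.

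Next I would determine the saddle points from $g'(\tau)=-3\tau^2-\frac{1}{\tau-x}=0$, i.e.\ from the cubic $3\tau^3-3x\tau^2+1=0$, and single out the dominant saddle $\tau_*=\tau_*(x)$, namely the root tending to $x$ (more precisely to $x-\frac{1}{3x^2}+\cdots$) as $x\to\infty$; this is the saddle through which the Cauchy loop deforms into a steepest-descent contour without crossing the other two saddles. Since numerator and denominator share the \emph{same} phase $g$, hence the same saddle and the same steepest-descent contour, the common Laplace factors $e^{ng(\tau_*)}$ and $\sqrt{2\pi/(n\,g''(\tau_*))}$ cancel in the quotient and only the extra factor $(\tau-x)^{-1}$, evaluated at $\tau_*$, survives. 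Therefore the ratio of integrals tends to $(\tau_*-x)^{-1}$, and
\[
  \lim_{n\to\infty}\frac{1}{n^{2/3}}\frac{P_{n,n}(n^{1/3}x)}{P_{n-1,n-1}(n^{1/3}x)}=\frac{1}{3(x-\tau_*)}=:\Phi(x).
\]

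To recognize the closed form in the statement I would use the saddle equation written as $3\tau_*^2(x-\tau_*)=1$, which turns $\Phi=\frac{1}{3(x-\tau_*)}$ into $\Phi=\tau_*^2$. The substitution $u=-1/\tau_*$ sends $3\tau^3-3x\tau^2+1=0$ to $u^3-3xu-3=0$ and gives $\Phi=u^{-2}$. Writing $u=p+q$ with $p=e^{\pi i/3}s_+^{1/3}$, $q=e^{-\pi i/3}s_-^{1/3}$ and $s_\pm=\frac{-3\pm\sqrt{9-4x^3}}{2}$ the two roots of $s^2+3s+x^3=0$, so that $s_+s_-=x^3$ (hence $pq=x$) and $s_++s_-=-3$ (hence $u^3-3xu=p^3+q^3=3$), one checks that $u^2=\omega s_+^{2/3}+\omega^{-1}s_-^{2/3}+2x=1/\Phi(x)$, which is precisely the denominator appearing in Theorem~\ref{thm:ratio}; the correct branch is the one with $u\to0$ (equivalently $\Phi\sim x^2$) as $x\to\infty$, consistent with $P_{n,n}/P_{n-1,n-1}$ behaving like $x^2$ far from the zeros. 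The second limit then follows from the first and the difference-differential relation \eqref{eq:Pnnrec}: solving it for the logarithmic derivative gives $\frac{P_{n,n}'(y)}{P_{n,n}(y)}=3y^2-3\frac{P_{n+1,n+1}(y)}{P_{n,n}(y)}$, and evaluating at $y=n^{1/3}x$, dividing by $n^{2/3}$ and applying the first part with $n+1$ in place of $n$ (the harmless rescaling $(n+1)^{1/3}/n^{1/3}\to1$ of the argument being absorbed by the uniform convergence on $K$) yields $3x^2-3\Phi(x)$.

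The main obstacle is the uniform control of this saddle-point analysis on a compact set $K\subset\mathbb{C}\setminus(\Gamma_0\cup\Gamma_1\cup\Gamma_2)$: one must show that a single saddle dominates and that the contour deformation is valid uniformly for $x\in K$. The three rays $\Gamma_0\cup\Gamma_1\cup\Gamma_2$ are exactly the anti-Stokes lines on which two of the saddles have equal $\operatorname{Re}g$ and exchange dominance, so off the rays the dominant saddle is unique and the steepest-descent estimates are uniform, whereas on the excluded rays the limit genuinely fails to exist — which explains the hypothesis on $K$.
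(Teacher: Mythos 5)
Your route is genuinely different from the paper's. You start from the closed form $P_{n,n}(x)=\frac{(-1)^n}{3^n}e^{x^3}\frac{d^n}{dx^n}e^{-x^3}$, pass to a Cauchy integral, and do steepest descent; the paper instead uses a normal-families (Montel) argument on the logarithmic derivatives $\frac{1}{n^{2/3}}P_{n,n}'(n^{1/3}x)/P_{n,n}(n^{1/3}x)$ — these are uniformly bounded on $K$ because all zeros of $P_{n,n}$ lie on the three rays by Proposition \ref{prop:sym} — and then pins down the subsequential limit uniquely by showing, via the difference--differential relation \eqref{eq:Pnnrec} and the nearest-neighbor recurrences \eqref{eq:Pnn1}--\eqref{eq:Pnn2}, that $\Phi$ satisfies the cubic $\Phi=(x-\frac{1}{3\Phi})^2$. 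Your computational skeleton checks out completely against the paper's answer: the prefactor ratio is indeed $-n^{2/3}/3$, your saddle equation $3\tau^3-3x\tau^2+1=0$ is equivalent (via $z=1/\tau_*=-u$) to the paper's $z^3-3xz+3=0$, the identity $3\tau_*^2(x-\tau_*)=1$ gives $\Phi=\tau_*^2=1/z^2$, and your Cardano computation with $pq=x$, $p^3+q^3=3$ reproduces the stated denominator $\omega s_+^{2/3}+\omega^{-1}s_-^{2/3}+2x$ with the correct branch $\Phi\sim x^2$. The second limit via \eqref{eq:Pnnrec} is also how the paper relates $F$ and $\Phi$. What your approach buys is an honest asymptotic mechanism (and, as a by-product, the location of the branch points and the failure set); what the paper's approach buys is that it never has to control contour deformations — compactness plus an algebraic identity does all the work, with the zero-location result of Proposition \ref{prop:sym} supplying the needed equiboundedness.

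The one genuine gap is the step you yourself flag: the uniform single-saddle dominance on $K$. You assert that the three rays are \emph{exactly} the anti-Stokes set where two saddles of $g(\tau)=x^3-\tau^3-\log(\tau-x)$ exchange dominance, but you do not prove it, and the claim as stated is not quite right: the set where the convergence actually fails is only $[0,(9/4)^{1/3}]\cup[0,\omega(9/4)^{1/3}]\cup[0,\omega^2(9/4)^{1/3}]$ (the limit does exist on the rays beyond the branch points $(9/4)^{1/3}\omega^j$), so "the limit genuinely fails on the excluded rays" overreaches. To close the argument you would need to (i) verify that the original loop around the order-$(n+1)$ pole at $\tau=x$ deforms, uniformly for $x\in K$, onto a steepest-descent cycle through the single saddle $\tau_*(x)$ without crossing the other two saddles, and (ii) show the dominant saddle does not change as $x$ ranges over a connected component of $\mathbb{C}\setminus(\Gamma_0\cup\Gamma_1\cup\Gamma_2)$ — e.g., by tracking the relative values of $\operatorname{Re}g$ at the three saddles from $x=\infty$ inward and using the $\omega$-symmetry $\operatorname{Re}g(\omega\tau;\omega x)=\operatorname{Re}g(\tau;x)$. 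This is standard but nontrivial bookkeeping (it is essentially what \cite{Dom} and \cite{Paris} carry out for these generalized Hermite polynomials), and without it the proof is a well-founded sketch rather than complete. Everything else in your proposal is correct.
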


\begin{proof}
Consider the ratio
\[   \frac{1}{N} \frac{\frac{d}{dx} P_{n,n}(N^{1/3} x)}{P_{n,n}(N^{1/3}x)} = \frac{1}{N^{2/3}} \frac{P_{n,n}'(N^{1/3}x)}{P_{n,n}(N^{1/3}x)}
    = \frac{1}{N} \sum_{j=1}^{2n} \frac{1}{x-x_{j,n}/N^{1/3}}, \]
where $\{x_{j,n}, 1 \leq j \leq 2n \}$ are the zeros of $P_{n,n}$ which are all on the set $\Gamma_0 \cup \Gamma_1 \cup \Gamma_2$,
then if $x \in K$ we have
\[   \left| \frac{1}{N^{2/3}} \frac{P_{n,n}'(N^{1/3}x)}{P_{n,n}(N^{1/3}x)} \right| \leq \frac{1}{N} \sum_{j=1}^{2n} \frac{1}{|x-x_{j,n}/N^{1/3}|}
\leq \frac{2n}{N\delta}, \]
where $\delta = \inf \{ |x-y|\ :\  x \in K, y \in \Gamma_0 \cup \Gamma_1 \cup \Gamma_2\} > 0$ is the minimal distance between $K$ and
$\Gamma_0 \cup \Gamma_1 \cup \Gamma_2$. If $n/N \to 1$ we then see that the family of analytic functions
\[    \frac{1}{N^{2/3}} \frac{P_{n,n}'(N^{1/3}x)}{P_{n,n}(N^{1/3}x)}, \]
is uniformly bounded on $K$. By Montel's theorem there exists a subsequence $(n_k)_{k \in \mathbb{N}}$ such that
\begin{equation}  \label{eq:F2}
  \lim_{n_k \to \infty} \frac{1}{n_k^{2/3}} \frac{P_{n_k-2,n_k-2}'(n_k^{1/3}x)}{P_{n_k-2,n_k-2}(n_k^{1/3}x)} = F(x),
\end{equation}
uniformly for $x \in K$, where $F$ is an analytic function on $K$ for which $F(x) = 2/x + \mathcal{O}(1/x^2)$ as
$x \to \infty$. This function $F$ may depend on the selected subsequence, so our aim is to prove that it is independent of the subsequence.

Now consider \eqref{eq:Pnnrec} for $P_{n_k-1,n_k-1}$, then
\[   \frac{1}{N^{2/3}} \frac{P_{n_k-1,n_k-1}(N^{1/3}x)}{P_{n_k-2,n_k-2}(N^{1/3}x)}
= x^2 - \frac{1}{3N^{2/3}} \frac{P_{n_k-2,n_k-2}'(N^{1/3}x)}{P_{n_k-2,n_k-2}(N^{1/3}x)}, \]
hence \eqref{eq:F2} implies (with $N=n_k$)
\begin{equation}  \label{eq:Phi2}
   \lim_{n_k \to \infty} \frac{1}{n_k^{2/3}} \frac{P_{n_k-1,n_k-1}(n_k^{1/3}x)}{P_{n_k-2,n_k-2}(n_k^{1/3}x)} = \Phi(x),
\end{equation}
uniformly on $K$, where $\Phi(x) = x^2 - F(x)/3$.
This uniform convergence of analytic functions implies also the uniform convergence of the derivatives, hence
\begin{multline*}
  \Phi'(x)  = \lim_{n_k \to \infty} \left( \frac{1}{n_k^{2/3}} \frac{P_{n_k-1,n_k-1}(n_k^{1/3}x)}{P_{n_k-2,n_k-2}(n_k^{1/3}x)} \right)' \\
    = \lim_{n_k \to \infty} \frac{1}{n_k^{2/3}} \frac{P_{n_k-1,n_k-1}(n_k^{1/3}x)}{P_{n_k-2,n_k-2}(n_k^{1/3}x)}
   \left(   \frac{n_k^{1/3} P_{n_k-1,n_k-1}'(n_k^{1/3}x)}{P_{n_k-1,n_k-1}(n_k^{1/3}x)} -
       \frac{n_k^{1/3} P_{n_k-2,n_k-2}'(n_k^{1/3}x)}{P_{n_k-2,n_k-2}(n_k^{1/3}x)} \right),
\end{multline*}
but this means that
\begin{equation}  \label{eq:F1}
  \lim_{n_k \to \infty} \frac{1}{n_k^{2/3}} \frac{P_{n_k-1,n_k-1}'(n_k^{1/3}x)}{P_{n_k-1,n_k-1}(n_k^{1/3}x)} = F(x),
\end{equation}
uniformly on $K$, with the same limit as in \eqref{eq:F2}. But then \eqref{eq:Pnnrec} implies that
\begin{equation}  \label{eq:Phi1}
   \lim_{n_k \to \infty} \frac{1}{n_k^{2/3}} \frac{P_{n_k,n_k}(n_k^{1/3}x)}{P_{n_k-1,n_k-1}(n_k^{1/3}x)} = \Phi(x),
\end{equation}
uniformly on $K$, with the same limit as in \eqref{eq:Phi2}. We can repeat this reasoning once more and find that
\begin{equation}  \label{eq:F0}
   \lim_{n_k \to \infty} \frac{1}{n_k^{2/3}} \frac{P_{n_k,n_k}'(n_k^{1/3}x)}{P_{n_k,n_k}(n_k^{1/3}x)} = F(x),
\end{equation}
and
\begin{equation}  \label{eq:Phi0}
   \lim_{n_k \to \infty} \frac{1}{n_k^{2/3}} \frac{P_{n_k+1,n_k+1}(n_k^{1/3}x)}{P_{n_k,n_k}(n_k^{1/3}x)} = \Phi(x),
\end{equation}
uniformly on $K$. 

We will show that the function $\Phi$ satisfies a cubic equation, from which we can determine $\Phi$ and hence also $F$ uniquely,
so that $\Phi$ and $F$ do not depend on the selected subsequence $(n_k)_{k \in \mathbb{N}}$.
Consider the nearest neighbor recurrence relations for the diagonal $n=m$
\begin{eqnarray}
  xP_{n,n}(x) &=& P_{n+1,n}(x) + c_{n,n} P_{n,n}(x) + a_{n,n} P_{n-1,n}(x) + b_{n,n} P_{n,n-1}(x) \label{eq:Pnn1} \\
  xP_{n,n}(x) &=& P_{n,n+1}(x) + d_{n,n} P_{n,n}(x) + a_{n,n} P_{n-1,n}(x) + b_{n,n} P_{n,n-1}(x). \label{eq:Pnn2}
\end{eqnarray}
Subtracting \eqref{eq:Pnn1} and \eqref{eq:Pnn2} gives
\[    P_{n+1,n}(x) - P_{n,n+1}(x) = (d_{n,n}-c_{n,n})P_{n,n}(x). \]
Use this for $n \to n-1$ to eliminate $P_{n-1,n}(x)$ in \eqref{eq:Pnn1} to find
\begin{multline*}
  xP_{n,n}(x) = P_{n+1,n}(x) + c_{n,n} P_{n,n}(x) + (a_{n,n}+b_{n,n}) P_{n,n-1}(x) \\
  +\ a_{n,n}(c_{n-1,n-1}-d_{n-1,n-1}) P_{n-1,n-1}(x).
\end{multline*}
 From Theorem \ref{thm:nearest recurrence} we have
\[  c_{n,n} = \frac{\Gamma(2/3)}{\Gamma(1/3)} e^{\pi i/3}, \quad  d_{n,n} = \frac{\Gamma(2/3)}{\Gamma(1/3)} e^{-\pi i/3}, \]
so that $c_{n-1,n-1}-d_{n-1,n-1}= i \sqrt{3}\Gamma(2/3)/\Gamma(1/3)$. Furthermore
\[  a_{n,n} = -\frac{ni}{3\sqrt{3}} \frac{\Gamma(1/3)}{\Gamma(2/3)}, \quad b_{n,n} = \frac{ni}{3\sqrt{3}} \frac{\Gamma(1/3)}{\Gamma(2/3)}, \]
so that the recurrence relation becomes
\begin{equation}  \label{eq:Pstep}
  xP_{n,n}(x) = P_{n+1,n}(x) + \frac{\Gamma(2/3)}{\Gamma(1/3)} e^{\pi i/3} P_{n,n}(x) + \frac{n}{3} P_{n-1,n-1}(x).
\end{equation}
Use this for $n^{1/3}x$ and divide by $P_{n,n}(n^{1/3}x)$, then
\[   x = \frac{1}{n^{1/3}} \frac{P_{n+1,n}(n^{1/3}x)}{P_{n,n}(n^{1/3}x)} + \frac{1}{n^{1/3}} \frac{\Gamma(2/3)}{\Gamma(1/3)} e^{\pi i/3}
    + \frac{n}{3n^{1/3}} \frac{P_{n-1,n-1}(n^{1/3}x)}{P_{n,n}(n^{1/3}x)}, \]
and by using \eqref{eq:Phi1} we find
\begin{equation}  \label{eq:Pnn+0}
  \lim_{n_k\to \infty} \frac{1}{n_k^{1/3}} \frac{P_{n_k+1,n_k}(n_k^{1/3}x)}{P_{n_k,n_k}(n_k^{1/3}x)} =
     x - \frac{1}{3\Phi(x)},
\end{equation}
uniformly on $K$. We can repeat the reasoning for $n \to n-1$ and use \eqref{eq:Phi2} to find
\begin{equation}  \label{eq:Pnn+1}
  \lim_{n_k\to \infty} \frac{1}{n_k^{1/3}} \frac{P_{n_k,n_k-1}(n_k^{1/3}x)}{P_{n_k-1,n_k-1}(n_k^{1/3}x)} =
     x - \frac{1}{3\Phi(x)},
\end{equation}
uniformly on $K$. But then the uniform convergence also holds for the derivative, and as before \eqref{eq:F1} then implies that
\[    \lim_{n_k \to \infty} \frac{1}{n_k^{2/3}} \frac{P_{n_k,n_k-1}'(n_k^{1/3}x)}{P_{n_k,n_k-1}(n_k^{1/3}x)} = F(x). \]
Use \eqref{eq:Pnnrec} for $P_{n+1,n}(n^{1/3}x)$ and divide by $P_{n,n-1}(n^{1/3}x)$, then the latter asymptotic result gives
\begin{equation}  \label{eq:Phi+1}
  \lim_{n_k \to \infty} \frac{1}{n_k^{2/3}} \frac{P_{n_k+1,n_k}(n_k^{1/3}x)}{P_{n_k,n_k-1}(n_k^{1/3}x)} = x^2 - \frac13 F(x) = \Phi(x),
\end{equation}
uniformly on $K$.

In a similar way as before, the nearest neighbor recurrence relations for $(n+1,n)$ can be transformed to
\begin{multline*}
  xP_{n+1,n}(x) = P_{n+1,n+1}(x) + d_{n+1,n} P_{n+1,n}(x) + (a_{n+1,n}+b_{n+1,n}) P_{n,n}(x) \\
  +\ b_{n+1,n}(d_{n,n-1}-c_{n,n-1}) P_{n,n-1}(x).
\end{multline*}
 From Theorem \ref{thm:nearest recurrence} we now use
\[   d_{n+1,n} = - b_0 e^{\pi i/3}, \quad c_{n+1,n} = b_1 e^{\pi i/3}, \]
so that $d_{n,n-1}-c_{n,n-1} = -(b_0+b_1)e^{\pi i/3} =-e^{\pi i/3}/(3a_1)$, where we used \eqref{eq:an bn recur2} with $n=1$.
We also have
\[   a_{n+1,n} = (n+1)a_1 e^{-\pi i/3}, \quad b_{n+1,n} = -na_1e^{-\pi i/3}, \]
so that the recurrence relation becomes
\begin{equation}  \label{eq:Pstep2}
  xP_{n+1,n}(x) = P_{n+1,n+1}(x) - \frac{\Gamma(2/3)}{\Gamma(1/3)} e^{\pi i/3} P_{n+1,n}(x) + a_1e^{-\pi i/3} P_{n,n}(x)
+ \frac{n}{3} P_{n,n-1}(x).
\end{equation}
Consider this for $n^{1/3}x$ and divide by $P_{n+1,n}(n^{1/3}x)$ then
\begin{multline*}
  x = \frac{1}{n^{1/3}} \frac{P_{n+1,n+1}(n^{1/3}x)}{P_{n+1,n}(n^{1/3}x)} - \frac{1}{n^{1/3}} \frac{\Gamma(2/3)}{\Gamma(1/3)} e^{\pi i/3}
    + \frac{a_1}{n^{1/3}} e^{-\pi i/3} \frac{P_{n,n}(n^{1/3}x)}{P_{n+1,n}(n^{1/3}x)} \\
     + \frac{n}{3n^{1/3}} \frac{P_{n,n-1}(n^{1/3}x)}{P_{n+1,n}(n^{1/3}x)}
\end{multline*}
and by using \eqref{eq:Pnn+0} and \eqref{eq:Phi+1} we find
\begin{equation}  \label{eq:Pnn+2}
  \lim_{n_k \to \infty} \frac{1}{n_k^{1/3}} \frac{P_{n_k+1,n_k+1}(n_k^{1/3}x)}{P_{n_k+1,n_k}(n_k^{1/3}x)} = x - \frac{1}{3\Phi(x)},
\end{equation}
uniformly on $K$.

Now use the relation
\[   \frac{1}{n^{2/3}} \frac{P_{n+1,n+1}(n^{1/3}x)}{P_{n,n}(n^{1/3}x)} =
  \frac{1}{n^{1/3}} \frac{P_{n+1,n+1}(n^{1/3}x)}{P_{n+1,n}(n^{1/3}x)} \frac{1}{n^{1/3}} \frac{P_{n+1,n}(n^{1/3}x)}{P_{n,n}(n^{1/3}x)}
\]
and let $n \to \infty$ through the subsequence $(n_k)_{k \in \mathbb{N}}$, then \eqref{eq:Phi0}, \eqref{eq:Pnn+0} and \eqref{eq:Pnn+2} show that
\begin{equation}  \label{eq:cubic}
   \Phi(x) = \left( x- \frac{1}{3\Phi(x)} \right)^2.
\end{equation}
The cubic equation \eqref{eq:cubic} has one solution $\Phi_1$ which behaves for $x \to \infty$ as
\[  \Phi_1(x) = x^2 + \mathcal{O}(1/x), \qquad x \to \infty.  \]
There are two other solutions $\Phi_{2,3}$ which behave as $1/(3x)$ as $x \to \infty$
\[  \Phi_2(x) = \frac{1}{3x} + \frac{1}{\sqrt{27} x^{5/2}} + \mathcal{O}(1/x^4), \quad
    \Phi_3(x) = \frac{1}{3x} - \frac{1}{\sqrt{27} x^{5/2}} + \mathcal{O}(1/x^4), \qquad x \to \infty. \]

Recall that our $\Phi$ satisfies  $\Phi(x) = x^2-F(x)/3$, where $F(x) = \mathcal{O}(1/x)$, so that we need the solution $\Phi_1$. The discriminant of \eqref{eq:cubic} is $(4x^3-9)/27$ so that $\Phi_1$ has branch points at $(9/4)^{1/3}$, $(9/4)^{1/3} e^{\pm 2\pi i/3}$, which are three points on $\Gamma_0$, $\Gamma_1$, $\Gamma_2$ respectively, and since all the zeros of $P_{n,n}$ are on $\Gamma_0 \cup \Gamma_1 \cup \Gamma_2$, we conclude that the scaled zeros $x_{j,n}/n^{1/3}$ are dense on the three segments $[0,(9/4)^{1/3}] \cup [0,(9/4)^{1/3}e^{2\pi i/3}]\cup [0,(9/4)^{1/3}e^{-2\pi i/3}]$. 

The cubic equation can be solved explicitly by using Cardano's formula: let $y=x-1/(3\Phi)$ and $z=1/y$, then the cubic equation \eqref{eq:cubic} becomes
\[    z^3 -3xz +3 = 0, \]
and the solutions are $z=\omega^j u^{1/3}+\omega^{-j}v^{1/3}$ $(j=0,1,2)$, where $\omega=e^{2\pi i/3}$, $u+v=-3$ and $uv=x^3$, i.e.,
\[  u= \frac{-3 + \sqrt{9-4x^3}}{2}, \quad v = \frac{-3 - \sqrt{9-4x^3}}{2} = \frac{2x^3}{-3+\sqrt{9-4x^3}}. \]
The solution $\Phi_1$ corresponds to the solution with $z(x) = 1/x + \mathcal{O}(1/x^4)$, and this is $z(x) = \omega^2 u^{1/3} + \omega^{-2} v^{1/3}$, and since $\Phi = y^2$, we find
\[  \Phi_1(x) = \frac{1}{(\omega^2 u^{1/3} + \omega^{-2} v^{1/3})^2} = \frac{1}{\omega u^{2/3} + \omega^{-1} v^{2/3} +2x} .  \]
\end{proof}

\begin{corollary}  \label{cor:zeros}
Let $\{x_{j,n}, j=1,2,\ldots,2n\}$ be the zeros of $P_{n,n}$ and $\mu_{n}$ be the normalized counting measure of the scaled zeros $x_{j,n}/n^{1/3}$,
\[ \mu_{n} = \frac{1}{2n} \sum_{j=1}^{2n} \delta_{x_{j,n}/n^{1/3}}. \]
Then the sequence $(\mu_{n})_n$ converges weakly to the probability measure $\mu$ for which
\[   \int f(x)\, d\mu(x) = \int_0^{(9/4)^{1/3}} v(x) f(x)\, dx + \int_0^{\omega(9/4)^{1/3}} v(x) f(x)\, dx
+ \int_0^{\omega^2(9/4)^{1/3}} v(x) f(x)\, dx ,  \]
where $\omega=e^{2\pi i/3}$ and
\begin{equation}  \label{eq:density}
   v(x) =  \frac{\sqrt{3}}{4\pi} \Bigl( 1+x[a(x)+b(x)] \Bigr) [b(x)-a(x)],
\end{equation}
with
\begin{equation}  \label{eq:ab}
   a(x) = \left( \frac{3-\sqrt{9-4x^3}}{2} \right)^{1/3}, \quad b(x) = \left( \frac{3+\sqrt{9-4x^3}}{2} \right)^{1/3}.
\end{equation}
\end{corollary}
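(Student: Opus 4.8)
The plan is to convert the ratio asymptotics of Theorem \ref{thm:ratio} into convergence of Cauchy transforms. Writing $P_{n,n}'/P_{n,n}(w)=\sum_{j=1}^{2n}(w-x_{j,n})^{-1}$ and setting $w=n^{1/3}x$, one gets
\[
  \frac{1}{n^{2/3}}\frac{P_{n,n}'(n^{1/3}x)}{P_{n,n}(n^{1/3}x)}
  =\frac{1}{n}\sum_{j=1}^{2n}\frac{1}{x-x_{j,n}/n^{1/3}}
  =2\int\frac{d\mu_{n}(t)}{x-t}=:2\,C_{\mu_{n}}(x).
\]
By Theorem \ref{thm:ratio} the left-hand side converges to $3x^2-3\Phi(x)$ uniformly on compact subsets of $\mathbb{C}\setminus(\Gamma_0\cup\Gamma_1\cup\Gamma_2)$, so $C_{\mu_{n}}(x)\to C(x):=\tfrac32\bigl(x^2-\Phi(x)\bigr)$ there.

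Next I would establish weak convergence. The proof of Theorem \ref{thm:ratio} shows that all zeros $x_{j,n}$ lie on $\Gamma_0\cup\Gamma_1\cup\Gamma_2$ and that the scaled zeros accumulate only on the three segments $[0,(9/4)^{1/3}\omega^k]$; hence the $\mu_n$ are supported in one fixed compact set and the family is tight. By Helly's selection theorem every subsequence has a further weakly convergent subsequence, and along it $C_{\mu_n}\to C_\nu$ pointwise off the rays, where $\nu$ is the subsequential limit. Since the Cauchy transform is injective on compactly supported measures, $C_\nu=C$ forces $\nu$ to be the same measure $\mu$ for every subsequence, giving $\mu_n\to\mu$ weakly. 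Letting $x\to\infty$ in $C(x)=\tfrac32(x^2-\Phi(x))$ and using $\Phi(x)=x^2+\mathcal{O}(1/x)$ from the proof of Theorem \ref{thm:ratio} gives $C(x)=1/x+\mathcal{O}(1/x^2)$, so $\mu$ is a probability measure.

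It remains to identify $\mu$ with the absolutely continuous measure \eqref{eq:density}. I would recover the density by Stieltjes--Perron inversion applied to $C=\tfrac32(x^2-\Phi)$: across each segment the part $x^2$ is analytic, so the density equals $\tfrac{3}{2\pi}\operatorname{Im}\Phi^{+}$, where $\Phi^{+}=\Phi_1^{+}$ is the boundary value from the appropriate side. By the $\omega$-symmetry of the cubic \eqref{eq:cubic}, namely $\Phi(\omega x)=\omega^2\Phi(x)$, it suffices to treat the segment $[0,(9/4)^{1/3}]$, where $9-4x^3>0$ and, in the notation \eqref{eq:ab}, $u=-a(x)^3$ and $v=-b(x)^3$ with $a,b$ real and $0<a<b$. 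Substituting the boundary cube roots into $\Phi_1^{-1}=\omega u^{2/3}+\omega^{-1}v^{2/3}+2x$ and using $ab=x$ reduces $\Phi^{+}$ to $1/z^2$ for one of the three roots $z=\tfrac{a+b}{2}\pm i\tfrac{\sqrt3}{2}(b-a)$ or $z=-(a+b)$ of \eqref{eq:cubic}; with $|z|^2=a^2-ab+b^2$ for the complex ones one finds $\operatorname{Im}\Phi^{+}=\tfrac{\sqrt3}{2}(b^2-a^2)/(a^2-ab+b^2)^2$. Finally I would simplify using the two relations $a^3+b^3=3$ and $ab=x$, which give $a^2-ab+b^2=3/(a+b)$ and $(a+b)^3=3+3x(a+b)$; these collapse $\tfrac{3}{2\pi}\operatorname{Im}\Phi^{+}$ exactly to $v(x)=\tfrac{\sqrt3}{4\pi}\bigl(1+x(a+b)\bigr)(b-a)$, which is \eqref{eq:density}.

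The main obstacle is the branch bookkeeping in the last step: one must decide which of the three candidate roots $z$ is the boundary value of the distinguished solution $\Phi_1$ (the one with $\Phi_1(x)=x^2+\mathcal{O}(1/x)$) on each side of the cut. Because $u$ and $v$ run along the negative real axis on the segment, the cube roots $u^{1/3},v^{1/3}$ sit exactly on their principal cuts, so the correct determination cannot be the naive principal branch but must be fixed by analytic continuation from $x>(9/4)^{1/3}$, where $v=\bar u$ and $\Phi_1$ is real. Getting this determination right is precisely what makes $\operatorname{Im}\Phi^{+}>0$ and yields the positive density $v(x)$ above.
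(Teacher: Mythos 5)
Your argument is essentially the paper's: you convert the ratio asymptotics of Theorem \ref{thm:ratio} into convergence of the Stieltjes transforms, $\int (x-t)^{-1}\,d\mu_n(t) = \tfrac{1}{2n^{2/3}}P_{n,n}'(n^{1/3}x)/P_{n,n}(n^{1/3}x) \to \tfrac32\bigl(x^2-\Phi(x)\bigr)$, deduce weak convergence, and recover the density by Stieltjes inversion on $[0,(9/4)^{1/3}]$ plus $\omega$-symmetry, with your identities $a^3+b^3=3$ and $ab=x$ reproducing \eqref{eq:density} exactly as in the paper. The only cosmetic difference is that you unpack the weak-convergence step via Helly selection, tightness and injectivity of the Cauchy transform, where the paper simply invokes the Grommer--Hamburger theorem \cite{GH}.
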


\begin{proof}
The Stieltjes transform of the measure $\mu_n$ is
\[  \int \frac{1}{x-t} \, d\mu_n(t) = \frac{1}{2n^{2/3}} \frac{P_{n,n}'(n^{1/3}x)}{P_{n,n}(n^{1/3}x)} , \]
hence Theorem \ref{thm:ratio} gives
\[  \lim_{n \to \infty} \int \frac{1}{x-t} \, d\mu_n(t) = \frac12 F(x) = \frac32 \Bigl( x^2-\Phi(x) \Bigr),  \]
uniformly on compact sets of $\mathbb{C} \setminus (\Gamma_0\cup\Gamma_1\cup\Gamma_2)$. The Grommer-Hamburger theorem
\cite{GH} then implies that $\mu_n$ converges weakly to a measure $\mu$ for which
\[   \int \frac{1}{x-t} \, d\mu(t) = \frac32 \Bigl( x^2-\Phi(x) \Bigr). \]
The function $\Phi$ is analytic in $\mathbb{C} \setminus ([0,(9/4)^{1/3}] \cup [0,\omega(9/4)^{1/3}] \cup [0,\omega^2(9/4)^{1/3}])$,
hence the measure $\mu$ is supported on $[0,(9/4)^{1/3}] \cup [0,\omega(9/4)^{1/3}] \cup [0,\omega^2(9/4)^{1/3}]$. Furthermore
it is absolutely continuous and we can find the density by using the Stieltjes inversion formula
\[   v(x) = - \frac{1}{\pi} \lim_{\epsilon \to 0+} \Im \frac{3}{2}  \Bigl( (x+i\epsilon)^2-\Phi(x+i\epsilon) \Bigr).  \]
Due to the $\omega$-symmetry, it is sufficient to determine $v(x)$ for $x \in [0,(9/4)^{1/3}]$. Clearly
\[   v(x) = \frac{3}{2\pi} \lim_{\epsilon \to 0+} \Im \Phi(x+i\epsilon) = \frac{3}{2\pi} \Im \frac{1}{(\omega^2a+\omega^{-2}b)^2}, \]
with $a$ and $b$ given in \eqref{eq:ab}. Then by some elementary (complex) calculus, using $(a^2-ab+b^2)(a+b)=a^3+b^3$ and $ab=x$, one
finds the expression \eqref{eq:density} for the density $v$.
\end{proof}

\begin{figure}[ht]
\begin{center}
\rotatebox{270}{\resizebox{!}{4in}{\includegraphics{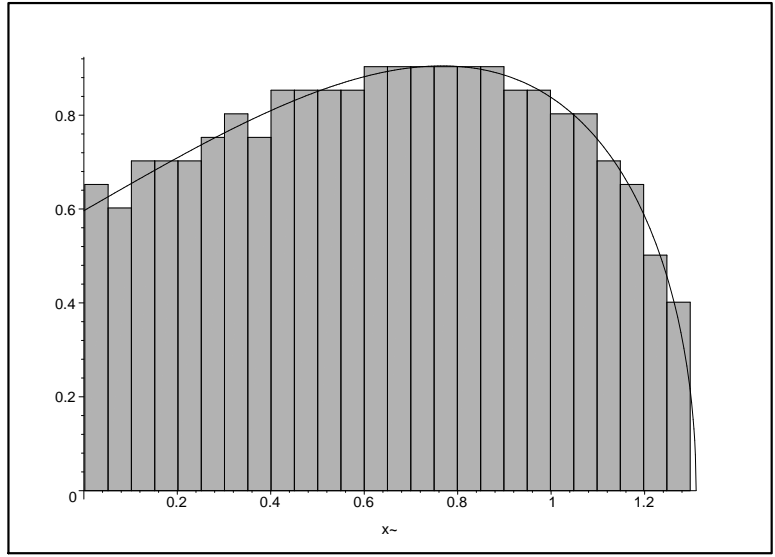}}}
\caption{Histogram of the real zeros of $P_{600,600}$ and the density $3v(x)$ on $[0,(9/4)^{1/3}]$}
\label{fig:6}
\end{center}
\end{figure}

In Figure \ref{fig:6} we have given a histogram of the 400 real zeros of $P_{600,600}$ together with the density $v$, scaled so as to have
total mass one for all the real zeros. There are 400 zeros on the interval $[0,\omega (9/4)^{1/3}]$ and 400 zeros on the interval
$[0,\omega^2 (9/4)^{1/3}]$ and these zeros are obtained by rotating the real zeros over an angle $\pm 2\pi/3$. The density $v$ has a finite non-zero
value at the origin $v(0)= 3^{1/3}\sqrt{3}/4\pi = 0.198788$ and tends to zero as $\sqrt{(9/4)^{1/3}-x}$ when $x\to (9/4)^{1/3}$.

If $k \neq l$, the rotational symmetry of the zeros is broken.
Suppose that $l \geq 2k$, then we see numerically that $k$ zeros of
$P_{k,l}$ lie on the line containing $\Gamma_1$ (some zeros are in
fact on $-\Gamma_1$), while the other zeros are distributed on a
complex contour in the lower half plane; see Figure \ref{fig:4}.
Similarly, if $k \geq 2l$, then $l$ zeros of $P_{k,l}$ lie on the
line containing $\Gamma_2$ (again some zeros are on $-\Gamma_2$),
and the other zeros are distributed on a complex contour in the
upper half plane, as illustrated in Figure \ref{fig:5}. Indeed, from
Theorem \ref{thm:Rodrigues}, it is easily seen that the zeros of
$P_{k,l}$ are complex conjugates of the zeros of $P_{l,k}$.
We expect that the asymptotic zero distribution of $P_{k,l}$ will
depend on the limit of the ratio $k/l$.

\begin{figure}[ht]
\begin{center}
 \resizebox*{7cm}{!}{\includegraphics{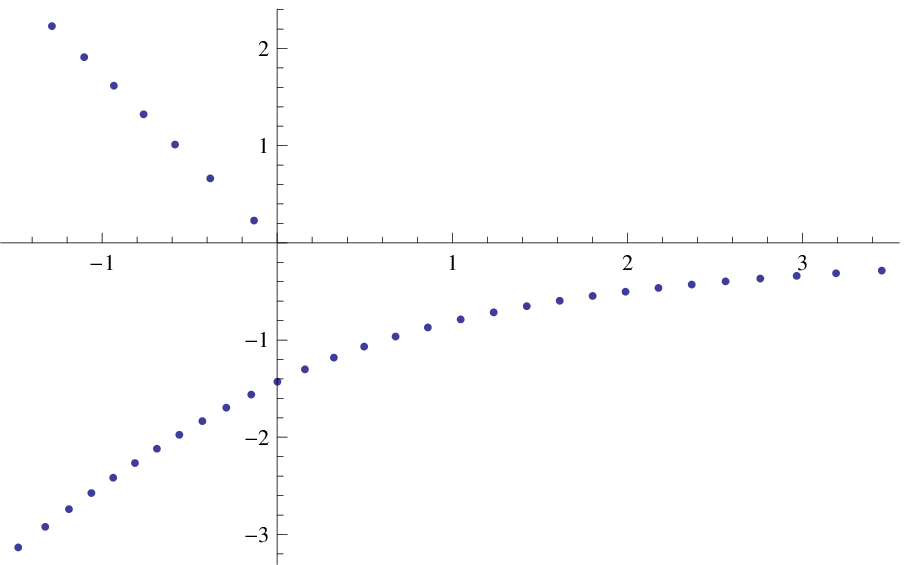}}
 \hspace{2mm} \resizebox*{7cm}{!}{\includegraphics{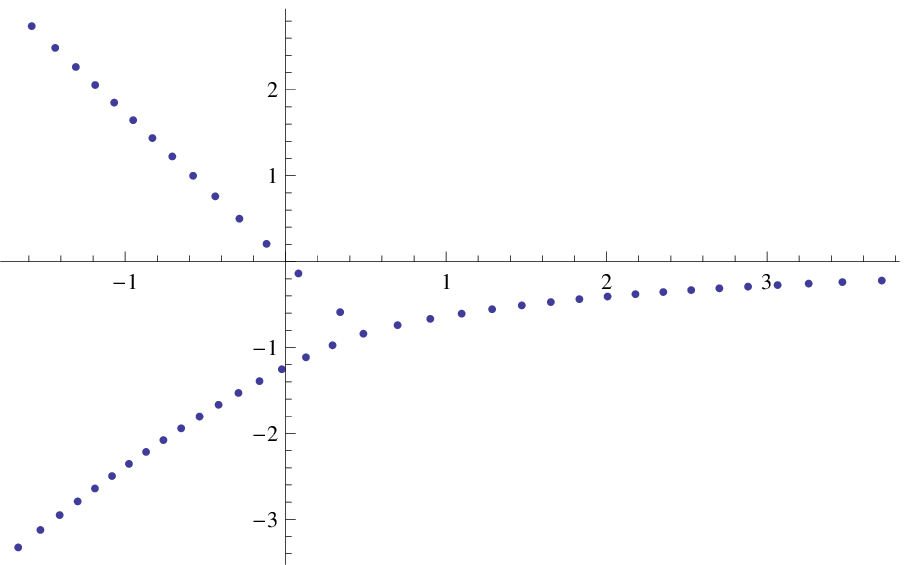}}
 \caption{Zeros of $P_{7,30}$ (left) and $P_{14,35}$ (right) after scaling} 
\label{fig:4}
\end{center}
\end{figure}

\begin{figure}[ht]
\begin{center}
 \resizebox*{7cm}{!}{\includegraphics{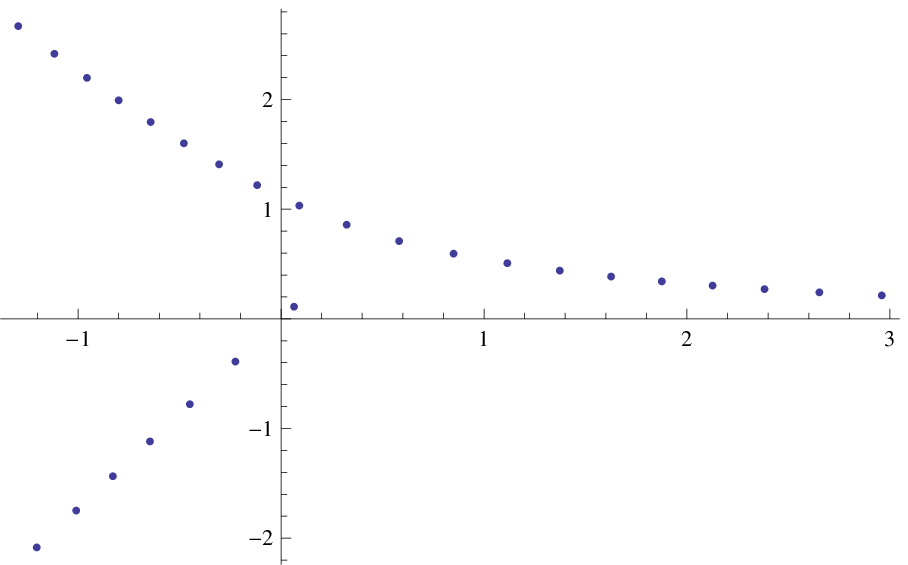}}
 \hspace{2mm} \resizebox*{7cm}{!}{\includegraphics{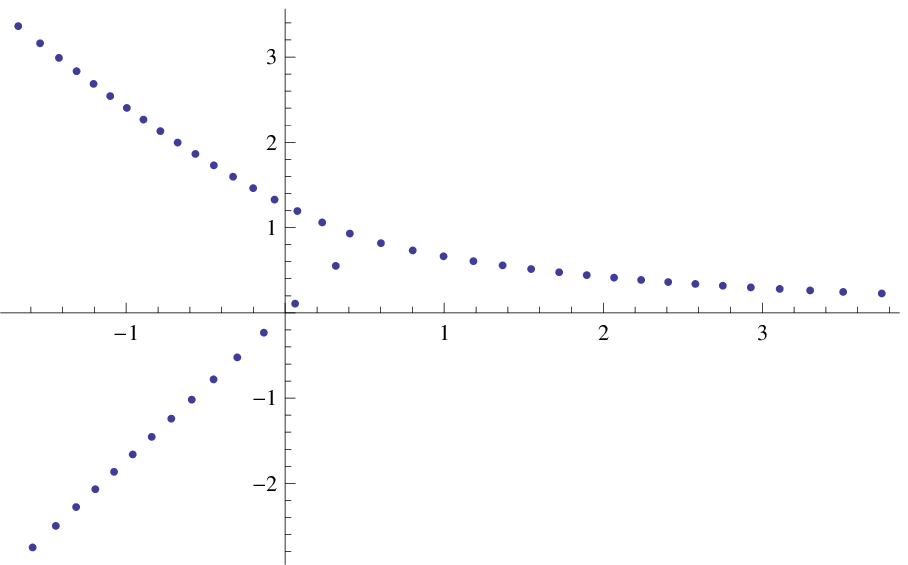}}
 \caption{Zeros of $P_{20,7}$ (left) and $P_{36,14}$ (right) after scaling} 
\label{fig:5}
\end{center}
\end{figure}


\section{Conclusions and outlook}
In this paper, we have introduced the multiple orthogonal
polynomials associated with an exponential cubic weight $e^{-x^3}$
over two contours in the complex plane. The basic properties of
these polynomials are studied, which include the Rodrigues formula
and nearest-neighbor recurrence relations. These results then allow
us to perform numerical studies of the recurrence coefficients and
zero distributions of the multiple orthogonal polynomials. Moreover, the recurrence coefficients are related to a discrete Painlev\'{e} equation. 
One can also consider the more general exponential cubic
weight $e^{-x^3+tx}$, where $t\in\mathbb{R}$, and the associated multiple
orthogonal polynomials have similar Rodrigues formulas and nearest-neighbor recurrence relations. 
Indeed, with $e^{-x^3}$ replaced by $e^{-x^3+tx}$ in Theorem \ref{thm:Rodrigues}, the difference-differential equations 
\eqref{eq:difference-diff eq} and \eqref{eq:differen-diff eq2} now read
\begin{align*}
P_{n,n+m}(x)&=(x^2-\frac{t}{3})P_{n-1,n+m-1}(x)-\frac{1}{3}P_{n-1,n+m-1}'(x), \\
P_{n+m,n}(x)&=(x^2-\frac{t}{3})P_{n+m-1,n-1}(x)-\frac{1}{3}P_{n+m-1,n-1}'(x).
\end{align*}
This then implies that 
\begin{align*}
\delta_{n,n+m}&=\delta_{0,m},\qquad \varepsilon_{n,n+m}=\varepsilon_{0,m}-\frac{t}{3}, \\
\delta_{n+m,n}&=\delta_{m,0},\qquad \varepsilon_{n+m,n}=\varepsilon_{m,0}-\frac{t}{3},
\end{align*}
where $\delta_{k,l}$ and $\varepsilon_{k,l}$ are defined in \eqref{eq:deltanm} and now depend on $t$. Following the same strategy as in the proof of Theorem \ref{thm:nearest recurrence} and using the string equation \eqref{eq:A string1} at the final stage, we obtain \eqref{eq:n+1} and \eqref{eq:n+m+1} with
\begin{align*}
c_{n,n+m}&=\left\{
            \begin{array}{ll}
              \beta_0^{(1)}(t), & \hbox{$m=0$,} \\
              -\beta_{m-1}^{(2)}(t), & \hbox{$m>0$,}
            \end{array}
          \right.
\\
d_{n,n+m}&=\beta_{m}^{(2)}(t),
\\
a_{n,n+m}&=\left\{
            \begin{array}{ll}
              \frac{n}{3\sqrt{3}}\frac{1}{\beta_0^{(1)}(t)-\beta_0^{(2)}(t)}, & \hbox{$m=0$,} \\
              -\frac{n}{m}(\gamma_m^{(2)}(t))^2-\frac{t}{3}, & \hbox{$m>0$,}
            \end{array}
          \right.
\\
b_{n,n+m}&=\left\{
            \begin{array}{ll}
              \frac{n}{3}\frac{1}{\beta_0^{(2)}(t)-\beta_0^{(1)}(t)}, & \hbox{$m=0$,} \\
              \frac{(n+m)}{m}(\gamma_m^{(2)}(t))^2, & \hbox{$m>0$,}
            \end{array}
          \right.
\end{align*}
where $\beta_n^{(1)}(t),(\gamma_{n}^{(1)})^2(t)$ are the recurrence coefficients of the monic orthogonal polynomials with respect to the weight 
$e^{-x^3+tx}$ on $\Gamma_0 \cup \Gamma_{1}$, and $\beta_n^{(2)}(t),(\gamma_{n}^{(2)})^2(t)$ are the recurrence coefficients of the monic orthogonal polynomials with respect to the weight $e^{-x^3+tx}$ on $\Gamma_0 \cup \Gamma_{2}$. The recurrence relations \eqref{eq:n+1 2} and \eqref{eq:n+m+1 2} for this general case can be obtained similarly, but we omit the results here. Clearly, in the general case, we lose the nice structure of the recurrence coefficients as stated in Theorem \ref{thm:nearest recurrence}, and more importantly we lose the symmetry given in Proposition \ref{prop:sym}, which
is why we focus on the weight $e^{-x^3}$ in this paper.


The challenging problem is to establish the asymptotic zero distribution of $P_{k,l}(x)$ for the non-symmetric case. At present we are unable to find an analogue of Theorem \ref{thm:ratio} and
Corollary \ref{cor:zeros} because of two reasons: first one needs the asymptotic behavior
of the recurrence coefficients and at present we can only conjecture the behavior
(see Proposition \ref{prop:anbn}). If we assume this to be correct, then the proof of
Theorem \ref{thm:ratio} can be used to find the asymptotic behavior, away from the set where
the zeros of the multiple orthogonal polynomials accumulate, in terms of an algebraic
function $\Phi$ satisfying a cubic equation. But the second reason is that we don't know where
the zeros of the multiple orthogonal polynomials accumulate. The discriminant of the cubic
equation is a quartic polynomial in $x$ and the four roots are branch points of the
algebraic function $\Phi$. The zeros will accumulate on two curves, each connecting two points in the complex plane, see Figures \ref{fig:4} and \ref{fig:5}. One of the curves is a straight line, the other is a curved line connecting two of the four branch points. The straight line, however, does not connect the other two branch points but starts from one branch point and stops before the second branch point is reached. This suggests that a vector equilibrium problem is involved, for two measures living on curves connecting four branch points, with an external field $x^3$ induced by the weight $e^{-x^3}$. In order to characterize the limiting zero distribution, one may need to extend the concept of $S$-property (cf. \cite{GoR,Stahl}) for orthogonal polynomials and equilibrium measures to this setting for multiple orthogonal polynomials and vector equilibrium problems. Once that is obtained, one may be able to use the Riemann-Hilbert method to find the asymptotic behavior of the multiple orthogonal polynomials.


\appendix
\section*{Appendix}

\section{Derivation of the string equations}
In this appendix, we give an alternative proof of the string
equations \eqref{eq:string 1}--\eqref{eq:string 2} using ladder
operators for orthogonal polynomials. Note that the ladder
operators for multiple orthogonal polynomials and their
compatibility conditions can be found in \cite{GVZ}.

Following the general set-up (cf. \cite{ci2}), if the weight
function $w$ vanishes at the endpoints of the orthogonality
interval, the lowering and raising ladder operators for the
associated monic polynomials $p_{n}$ are given by
\begin{align}
\left( \frac{d}{dx} + B_n(x) \right) p_n(x) & = \gamma_n^2 A_n(x)
p_{n-1}(x), \label{ladder1} \\
\left( \frac{d}{dx} - B_n(x) - \textsf{v}'(x) \right) p_{n-1}(x) & =
- A_{n-1}(x) p_n(x), \label{ladder2}
\end{align}
with
$$\textsf{v}(x):=-\ln w(x), $$
and
\begin{align} A_n(x) & :=
\frac{1}{h_n} \int \frac{\textsf{v}'(x) - \textsf{v}'(y)}{x-y} \
[p_n(y)]^2 w(y) dy, \label{an-def}\\ 
B_n(x) & := \frac{1}{h_{n-1}}
\int \frac{\textsf{v}'(x) - \textsf{v}'(y)}{x-y} \ p_{n-1}(y) p_n(y)
w(y) dy, \label{bn-def}
\end{align}
where
\begin{equation}\label{eq:orthogonality A}
\int p_m(x) p_n(x)\omega(x) dx = h_n \delta_{m,n}, \quad
m,n=0,1,2,\ldots.
\end{equation}
Note that $A_n$ and $B_n$ are not independent, but satisfy the
following compatibility conditions \cite[Lemma 3.2.2 and Theorem
3.2.4]{Ismail}.
\begin{proposition}
The functions $A_n$ and $B_n$ defined  in (\ref{an-def}) and
(\ref{bn-def}) satisfy 
$$
B_{n+1}(x) + B_n(x)  = (x- \beta_n) A_n(x) -
\textup{\textsf{v}}\,'(x), \eqno(S_1)
$$
$$
1+ (x- \beta_n) [B_{n+1}(x) - B_n(x)] = \gamma_{n+1}^2 A_{n+1}(x) -
\gamma_n^2 A_{n-1}(x).\eqno(S_2)
$$
\end{proposition}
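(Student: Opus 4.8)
The plan is to prove both compatibility conditions by purely algebraic manipulation, using only the three-term recurrence \eqref{recurrence}, the standard relation $\gamma_n^2=h_n/h_{n-1}$ between the recurrence coefficients and the squared norms, and integration by parts based on $\textsf{v}'=-w'/w$ (here the hypothesis that $w$ vanishes at the endpoints is exactly what makes every boundary term disappear). The single recurring device is the splitting
\[
    \frac{x-\beta_n}{x-y} = 1 + \frac{y-\beta_n}{x-y},
\]
which lets me multiply the integrands \eqref{an-def}--\eqref{bn-def} defining $A_n$ and $B_n$ by $(x-\beta_n)$ at the cost of an extra ``residue'' integral $\int(\textsf{v}'(x)-\textsf{v}'(y))(\cdots)\,w(y)\,dy$, combined with the recurrence written as $(y-\beta_n)p_n(y)=p_{n+1}(y)+\gamma_n^2 p_{n-1}(y)$.

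For $(S_1)$ I would start from $(x-\beta_n)A_n(x)$, insert the splitting, and treat the two resulting pieces separately. The residue piece collapses to $\textsf{v}'(x)$: the factor $\textsf{v}'(x)$ is produced because $\frac{1}{h_n}\int p_n^2\,w=1$, while $\frac{1}{h_n}\int\textsf{v}'(y)p_n^2\,w\,dy$ vanishes, since after integration by parts it equals $\frac{2}{h_n}\int p_n p_n'\,w\,dy=0$ by orthogonality. In the remaining piece I replace $(y-\beta_n)p_n^2$ by $p_n(p_{n+1}+\gamma_n^2 p_{n-1})$; the first term is exactly $B_{n+1}(x)$, and, using $\gamma_n^2 h_{n-1}=h_n$, the second is $B_n(x)$. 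Rearranging yields $(S_1)$.

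For $(S_2)$ I would compute $(x-\beta_n)[B_{n+1}(x)-B_n(x)]$ by the same method, handling $(x-\beta_n)B_{n+1}$ and $(x-\beta_n)B_n$ in turn. This time the residue integrals produce the integers $-(n+1)$ and $-n$ respectively, through $\int\textsf{v}'(y)p_np_{n+1}\,w\,dy=(n+1)h_n$ and $\int\textsf{v}'(y)p_{n-1}p_n\,w\,dy=n\,h_{n-1}$, again via integration by parts together with the leading coefficients of $p_{n+1}'$ and $p_n'$. After applying the recurrence, $(x-\beta_n)B_{n+1}$ contributes $\gamma_{n+1}^2A_{n+1}$ plus a mixed integral built from $p_{n-1}p_{n+1}$, while $(x-\beta_n)B_n$ contributes $\gamma_n^2A_{n-1}$ plus the very same mixed integral; the key point is that these mixed terms carry coefficients $\gamma_n^2/h_n$ and $1/h_{n-1}$, which coincide because $\gamma_n^2=h_n/h_{n-1}$, so they cancel in the difference. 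What survives is $-1+\gamma_{n+1}^2A_{n+1}-\gamma_n^2A_{n-1}$, which is precisely $(S_2)$.

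The integration-by-parts evaluations are routine, so the real obstacle is the bookkeeping: one must track which norm $h_j$ divides each integral, recognize that the awkward $p_{n-1}p_{n+1}$ integral — which is itself none of the $A_j$ or $B_j$ — appears with matching weights in the two halves and therefore drops out, and deploy the relation $\gamma_n^2=h_n/h_{n-1}$ at exactly the right places. It is this relation that simultaneously identifies the second $B$-term in $(S_1)$ and forces the cancellation of the mixed terms in $(S_2)$.
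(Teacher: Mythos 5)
Your argument is correct: both $(S_1)$ and $(S_2)$ follow exactly as you describe from the splitting $\frac{x-\beta_n}{x-y}=1+\frac{y-\beta_n}{x-y}$, the recurrence, the identity $\gamma_n^2=h_n/h_{n-1}$, and the integration-by-parts evaluations $\int \textsf{v}'(y)p_n^2w\,dy=0$, $\int \textsf{v}'(y)p_np_{n+1}w\,dy=(n+1)h_n$; in particular the cancellation of the $p_{n-1}p_{n+1}$ cross terms in $(S_2)$ is handled correctly. The paper itself gives no proof of this proposition --- it is quoted from Ismail's book [Lemma 3.2.2 and Theorem 3.2.4] --- and your derivation is precisely the standard one given there.
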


Now we consider a more general exponential cubic weight
$e^{-x^3+tx}$, with parameter $t\in\mathbb{R}$. Then
$$\textsf{v}(x)=-\ln w(x)=x^3-tx ,$$ and
$$\frac{\textsf{v}\,'(x)-\textsf{v}\,'(y)}{x-y}=3(x+y).$$
It then follows from (\ref{an-def})--\eqref{eq:orthogonality A} that
\begin{equation}\label{eq:An Bn}
A_n(x)=3(x+\beta_n), \qquad B_n(x)=3\gamma_n^2.
\end{equation}
Substituting (\ref{eq:An Bn}) into $(S_1)$ and comparing the
coefficients of the constant term, we have
\begin{equation}\label{eq:A string1}
\gamma_n^2+\gamma_{n+1}^2+\beta_n^2-\frac{t}{3}=0.
\end{equation}
From $(S_2)$ we similarly get
\begin{equation}\label{eq:A string2}
3\gamma_n^2(\beta_{n-1}+\beta_n)=n.
\end{equation}
Note that in this case, the recurrence coefficients $\beta_n$ and
$\gamma_n^2$ all depend on $t$. By setting $t=0$ in \eqref{eq:A
string1} and \eqref{eq:A string2}, we recover the string equations
\eqref{eq:string 1} and \eqref{eq:string 2}.

The weight $e^{-x^3+tx}$ is a modification of the weight $e^{-x^3}$ with an exponential factor $e^{tx}$, and as a consequence
the recurrence coefficients satisfy the Toda equations \cite[\S 2.8]{Ismail}
\begin{align}
  \frac{d}{dt} \gamma_n^2 &= \gamma_n^2 ( \beta_n-\beta_{n-1}),  \label{Toda1} \\
  \frac{d}{dt} \beta_n &= \gamma_{n+1}^2-\gamma_n^2.  \label{Toda2}
\end{align}
If we differentiate \eqref{Toda2} and then use \eqref{Toda1}, we find
\[   \beta_n''(t) = \gamma_{n+1}^2 (\beta_{n+1}-\beta_n) - \gamma_n^2(\beta_n-\beta_{n-1}).  \]
Then use \eqref{eq:A string1} and \eqref{eq:A string2} to find
\[   \beta_n''(t) = 2\beta_n^3 -\frac{2t}{3}\beta_n + \frac{2n+1}{3}, \]
which is the Painlev\'e II equation. 
The equations \eqref{eq:A string1} and \eqref{eq:A string2} give
\[   \frac{n}{\beta_n+\beta_{n-1}} + \frac{n+1}{\beta_{n+1}+\beta_n} + 3\beta_n^2 = t  \]
which also follows from the B\"acklund transformation of the second Painlev\'e
equation (see \cite{FGR} and \cite{CMW}), hence it is not surprising to find that $\beta_n$ 
satisfies the Painlev\'e II equation.

If we write $x_n(t) = a\beta_n(-at)$, where $a=(3/2)^{1/3}$, then
\[    x_n''(t) = 2 x_n^3 + tx_n + n+ \frac12, \]
which is the Painlev\'e II equation in standard form and with parameter $\alpha=n+\frac12$. 
The second Painlev\'e equation is closely related to
the Airy equation and has special solutions in terms of Airy functions for the parameter values $\alpha = n +\frac12$, with $n \in \mathbb{Z}$
\cite[\S 7.1]{Clarkson}. This relation with the Airy function was to be expected since one has the integral representation
\[   \textrm{Ai}(t) = \frac{1}{2\pi i} \int_{\infty e^{-\pi i/3}}^{\infty e^{\pi i/3}} e^{z^3/3 - tz}\, dz, \]
(see Eq. 9.5.4 of the NIST Digital Library of Mathematical Functions\footnote{\texttt{http://dlmf.nist.gov}}) 
which contains (a slight variation of) the weight $e^{-z^3+tz}$. The special solution of Painlev\'e II in terms of Airy functions
is 
\[    x_n(t) = \frac{\tau_n'(t)}{\tau_n(t)} - \frac{\tau_{n+1}'(t)}{\tau_{n+1}(t)},  \]
where $\tau_n$ is the Hankel matrix
\[   \tau_n(t) = \begin{pmatrix} \varphi(t) & \varphi'(t) & \cdots & \varphi^{(n-1)}(t) \\
                                 \varphi'(t) & \varphi''(t) & \cdots  & \varphi^{(n)}(t) \\
                                    \vdots &   \vdots  &  \cdots & \vdots \\
                                  \varphi^{(n-1)}(t) & \varphi^{(n)}(t) & \cdots & \varphi^{(2n-2)}(t) 
               \end{pmatrix}, \]
and $\varphi$ is a solution of the Airy equation $\varphi''+\frac12 t\varphi = 0$. This solution of P$_{\textrm{II}}$ coincides with the well known solution
\[   \beta_n = \delta_n - \delta_{n+1}, \]
where $\delta_n$ is the coefficient of $x^{n-1}$ for the monic orthogonal polynomial $P_n(x)= x^n + \delta_nx^{n-1} + \cdots$. One has
$\delta_n = \Delta^*_n/\Delta_n$, where $\Delta_n$ is the Hankel matrix containing the moments
\[  \qquad \Delta_n = \begin{pmatrix} m_0 & m_1 & \cdots & m_{n-1} \\
                                                                               m_1 & m_2 & \cdots & m_{n} \\
                                                                               \vdots & \vdots & \cdots & \vdots \\
                                                                               m_{n-1} & m_n & \cdots & m_{2n-2} 
                                                              \end{pmatrix}  \]
and $\Delta^*_n$ is a similar determinant but with the last column replaced by $m_n, m_{n+1}, \ldots, m_{2n-1}$ respectively.
Even though this is an explicit solution, it is not convenient for finding the
recurrence coefficients when $n$ is large because of the high number of computations involved, whereas the relations \eqref{eq:A string1}--\eqref{eq:A string2} have a low computational complexity. The explicit solution is also not convenient for obtaining the asymptotic behavior of $\beta_n$ and $a_n^2$ as $n \to \infty$,
which is easier to obtain from the string equations (see Proposition \ref{prop:anbn}).

\section*{Acknowledgements}

The authors are grateful to the referees for their careful reading and constructive suggestions. WVA is supported by KU Leuven research grant OT/12/073,  FWO research grant G.0934.13 and the Belgian Interuniversity Attraction Poles Programme P7/18. GF is supported by the MNiSzW Iuventus Plus grant Nr 0124/IP3/2011/71. LZ was a Postdoctoral Fellow of FWO, and is also partially supported by The Program for Professor of Special Appointment
(Eastern Scholar) at Shanghai Institutions of Higher Learning (No. SHH1411007) and by Grant SGST 12DZ 2272800 from Fudan University.


\end{document}